\documentclass{amsart}
\usepackage{amssymb}
\usepackage{amsmath}
\usepackage{amsthm}
\usepackage{amsbsy}
\usepackage{bm}
\usepackage[dvips]{graphicx}

\theoremstyle{plain}
\newtheorem{theorem}{Theorem}[section]
\newtheorem{lemma}[theorem]{Lemma}
\newtheorem{corollary}[theorem]{Corollary}
\newtheorem{proposition}[theorem]{Proposition}

\theoremstyle{definition}
\newtheorem{definition}[theorem]{Definition}

\theoremstyle{remark}

\newcommand{\M}{\widetilde{M}}

\newcommand{\del}{\partial}

\begin{document}
\title{ The Complex of Non-separating Embedded Spheres}

\author{Suhas Pandit}
\address{Mathematics Section,\\
          The Abdus Salam International Center for Theoretical Physics,\\
       Trieste, Italy}
\email{jsuhas@gmail.com,spandit@ictp.it}

\date{\today}
\subjclass{Primary 57M07 ; Secondary 57M05, 20E05}
 \maketitle

 \begin{abstract}
 For $n\geq 3$, we shall show that the group $Aut(NS(M))$ of simplicial automorphisms of the complex $NS(M)$ of non-separating embedded spheres in the manifold $M=\sharp_{n} S^{2}\times S^{1}$ is isomorphic to the group $Out(\mathbb F_n)$ of outer automorphisms of the free group $\mathbb F_n$, where $\mathbb F_n$ is identified with the fundamental group of the manifold $M$ up to conjugacy of the base point in $M$.
 \end{abstract}
\section{Introduction}
Let $\mathbb F_n$ be a free group of rank $n$. Let $Aut(\mathbb F_n)$ and $Out(\mathbb F_n)$ be the groups of automorphisms and outer automorphisms of $\mathbb F_n$. The group $Out(\mathbb F_n)$ is the quotient of $Aut(\mathbb F_n)$ by the subgroup of inner automorphisms of $\mathbb F_n$. These groups are important objects in geometric group theory and combinatorial group theory. These groups have been studied extensively in analogy with the fundamental groups of surfaces and the mapping class groups of surfaces, for instance, see~\cite{BFH1}, \cite{BF}, \cite{BFH2}, \cite{BH}, where these groups have been studied via a one-dimensional model which arises from regarding $\mathbb F_n$ as the fundamental group of a graph. Recall that the mapping class group of a surface is the group of isotopy classes of homeomorphisms of the surface. In order to study these groups, people are using topological and geometric objects on which these groups act.

Culler and Vogtmann ~\cite{CV}, introduced a space $CV_n$ called the \emph{outer space} of a free group of rank $n$ on which the group $Out(\mathbb F_n)$ acts with finite point stabilizers. The outer space with the action of $Out(\mathbb F_n)$ can be thought of as a free group analogous to the Teichm\"uller space of a surface with the action of the mapping class group of the surface. Culler and Morgan have constructed a compactification of outer space much like Thurston's
compactification of Teichm\"uller space of a surface, see ~\cite{CM}.

In the case of surfaces, to study mapping class groups of the surfaces and their Teichm\"uller spaces, we have objects like the curve complex associated to a surface, the complex of non-separating curves, the complex of pants decompositions. Recall that for a closed hyperbolic surface $F$, the curve complex $\mathbb C(F)$ is the simplicial complex with vertices free homotopy classes of essential simple closed curves on $F$. A finite collection of  isotopy classes of simple closed curves in $F$ is deemed to span a simplex if they can be realized disjointly (up to isotopy) in $F$.  There is a natural simpicial action of the mapping class group $\mathcal{M}ap(F)$ on the curve complex. This yields a homomorphism
$$\mathcal{M}ap(F) \to Aut(\mathbb C(F)),$$
 where $Aut(\mathbb C(F))$ is the group of simplicial automorphisms of the curve complex $\mathbb C(F)$. Ivanov proved that the group $Aut(\mathbb C(F))$
 is isomorphic to the mapping class group $\mathcal{M}ap(F)$ for connected orientable surfaces of genus at least $2$ in ~\cite{Iv2}. Ivanov's results were extended to lower genus cases by Korkmaz in ~\cite{Ko}. Feng Luo gave a new proof that the automorphisms of the curve complex are induced by homeomorphisms of the surface if the dimension of the curve complex is at least one and the surface is not a torus with two holes in ~\cite{FL1}. After Ivanov's work, mapping class group was viewed as the automorphism group of various geometric objects on surfaces, for instance, the complex of pants decompositions in \cite{Mar}, the complex of non-separating curves in \cite{Ir}, the complex of separating curves in \cite{BM} and the arc and curve complex in \cite{KP}.

In the free group context, people have considered complexes like the splitting complex, the complex of free factors of $\mathbb F_n$ (\cite{HV2}), the complex of separating embedded spheres (\cite{SlSa}). In \cite{HM}, Handle and Mosher have proved that the splitting complex of a free group of finite rank with its geodesic simplicial metric is Gromov hyperbolic. In \cite{BF1}, Bestvina and Feighn have proved that the complex of free factors of a free group of finite rank is hyperbolic. The $3$-manifold $M=\sharp_{n} S^{2}\times S^{1}$ has the fundamental group a free group of rank $n$. The manifold $M$ is used as a model for studying the free group and its automorphisms, in particular, the group of outer automorphisms of free groups. It was first used by Whitehead which involves looking at embedded spheres in $M$ (\cite{St3}). An embedded sphere in $M$ corresponds to a free splitting of the fundamental group $G=\pi_1(M)$.  The \emph{sphere complex} $\mathbb S(M)$ (see definition \ref{spc}) associated to $M$ is equivalent to the splitting complex associated to $\mathbb F_n$, where we identify $\mathbb F_n$ with  $\pi_1(M)$. There is a natural action of the mapping class group $\mathcal{M}ap(M)$ of $M$ on the sphere complex which yields an action of $Out(\mathbb F_n)$ on the sphere complex $\mathbb S(M)$. This gives a homomorphism $Out(\mathbb F_n) \to Aut(\mathbb S(M))$, where $Aut(\mathbb S(M))$ is the group of simplicial automorphisms of the sphere complex $\mathbb S(M)$. In~\cite{AS}, it is proved that the group $Aut(\mathbb S(M))$ is isomorphic to the group $Out(\mathbb F_n)$ for $n\geq 3$.

In this paper, we consider the \emph{complex $NS(M)$ of non-separating embedded spheres} in $M$ (see definition \ref{nsc}) on which the group $Out(\mathbb F_n)$ acts. An embedded sphere $S$ in $M$ is said to be \emph{non-separating} if the complement $M-S$ of $S$ in $M$ is connected.  We shall  prove that the group $Aut(NS(M))$ of simplicial automorphisms of the complex $NS(M)$ is isomorphic to $Out(\mathbb F_n)$, for $n\geq 3$ (see Theorem \ref{MainT}). It is due to Hatcher that the sphere complex contains an embedded copy of the \emph{spine} $K_n$ of the \emph{reduced outer space} $cv_n$. The complex $NS(M)$ also contains an embedded copy of the \emph{spine} $K_n$ of the reduced outer space $cv_n$.  As in ~\cite{AS}, we shall show that the spine $K_n$ is invariant under the group $Aut(NS(M))$ (Lemma \ref{spinelemma}) by showing that given any simplicial automorphism $\phi$ of $NS(M)$, it maps simplices in $NS(M)$ corresponding to  reduced sphere systems in $M$ to simplices in $NS(M)$ corresponding to reduced sphere systems in $M$ (see Lemma \ref{mainlemma}). The idea is to look at the links of such simplices in $NS(M)$. We shall observe that the link of a simplex in $NS(M)$ corresponding to a reduced sphere system in $M$ is spanned by finitely many vertices of $NS(M)$ and the link of an $(n-1)$-simplex which corresponds to a sphere system which is not reduced in $M$  is spanned by infinitely many vertices of $NS(M)$.  We call a simplex in $NS(M)$ which corresponds to a simple sphere system in $M$ as a \emph{simple simplex}. Using Lemma \ref{mainlemma}, we see that every simplicial automorphism $\phi$ of $NS(M)$ maps a simple simplex in $NS(M)$ to a simple simplex in $NS(M)$ (Lemma \ref{simplelemma}). From this, Lemma \ref{spinelemma} follows and then the rest of the proof follows as in ~\cite{AS}.

 Now, we outline the paper: In Section \ref{model}, we discuss the model $3$-manifold $M$ and some definitions. In Section~\ref{splittingsembed}, we show that embedded spheres in $M$ correspond to splittings of $G=\pi_1(M)$. This result is well known in the field (see ~\cite{He}), but here we give another proof of this in $M$ for the sake of completeness. In Section~\ref{GOG}, we show that given a graph of groups decomposition $\mathcal G$ of the fundamental group $G$ of $M$, there exist a system of $2$-spheres in $M$ which gives the same graph of groups decomposition $\mathcal G$ of $G$. In Section \ref{CNS}, we discuss complex $NS(M)$ of non-separating spheres and some facts about it. In Section \ref{Main}, we give a proof of the Theorem \ref{MainT}.

\tableofcontents

\section{Preliminaries}\label{model}
\subsection{The model $3$-manifold}\label{modelm}
Consider the 3-manifold $M = \sharp_{n} S^{2}\times S^{1} $, i.e.,  the connected sum of $n$ copies of $S^{2}\times S^{1} $. A description of $M$ can be given as follows: Consider the $3$-sphere $S^{3}$ and let $A_{i},B_{i},1\leq i\leq n$, be a collection of $2n$ disjointly embedded $3$-balls in $S^{3}$. Let $S_{i}$ (respectively, $T_{i}$) denote the boundary of $A_{i}$ (respectively, $B_{i}$). Let $P$ be the complement of the union of the interiors of all the balls $A_i$ and $B_i$. Then, $M$ is obtained from $P$ by gluing together $S_{i} $ and $T_{i}$ with an orientation reversing diffeomorphism $\varphi_{i}$ for each $i, 1\leq i\leq n $. The image of $S_i$ (hence $T_i$) in $M$ will be denoted $\Sigma^0_i$. The fundamental group $\pi_1(M)$ of $M$ is a free group of rank $n$. We identify the free group $\mathbb F_n$ of rank $n$ with $\pi_1(M)$ up to conjugacy of the base point.

 \begin{definition}
A smooth embedded $2$-sphere $S$ in $M$ is said to be \emph{essential} if it does not bound a $3$-ball in $M$.
\end{definition}
We shall always consider essential smooth embedded $2$-spheres in $M$ throughout this paper. So by an embedded sphere, we mean an essential embedded $2$-sphere in $M$.

\begin{definition}
Two embedded $2$-spheres $S$ and $S'$ in $M$ are \emph{parallel} if they are isotopic, i.e., they bound a manifold of the form $S^2\times (0,1)$.
 \end{definition}

 By Laudenbach's work (~\cite{La1}), the embedded spheres $S$ and $S'$ in $M$ are parallel if and only if they are homotopic to each other.

\begin{definition}
\emph{A system of $2$-spheres} in $M$ is defined to be a finite collection of disjointly embedded essential $2$-spheres $S_i \subset  M$ such that no two spheres in this collection are parallel.
\end{definition}

A system of spheres in $M$ is \emph{maximal} if it is not properly contained in another system of spheres in $M$.

\begin{definition}
A \emph{reduced} sphere system is a sphere system in $M$ with connected, simply-connected complement, i.e., it cuts the manifold $M$ into a single simply-connected component.
\end{definition}

Note that a reduced sphere system in $M= \sharp_{n} S^{2}\times S^{1} $ contains exactly $n$ non-separating sphere components and it defines a basis of $H_2(M,\mathbb Z)$. Let $\Sigma=\Sigma_1\cup\dots\cup \Sigma_n$ be a reduced sphere system in $M$. We choose a transverse orientation for each sphere $\Sigma_i$ so we may speak of a positive and a negative side of $\Sigma_i$. Let $p\in M$ be a base point in the complement of $\Sigma$. A basis dual to $\Sigma$ is a set of homotopy classes of loops $\gamma_1,\dots,\gamma_n$  in $M$ based at $p$ such that the loop $\gamma_i$ is disjoint from $\Sigma_j$ for all $j\neq i$ and intersects $\Sigma_i$ in a single point. We orient $\gamma_i$ such that it approaches $\Sigma_i$ from the positive side. Since the complement of $\Sigma$ is simply connected, the homotopy classes of loops $\gamma_i$ define a basis of $\pi_1(M, p)$.  Thus, a  reduced sphere system gives a basis of the fundamental group of $M$. Note that the sphere system $\Sigma^0_i$ described in Subsection \ref{modelm} is a reduced sphere system. By splitting $M$ along a reduced sphere system $\Sigma=\cup_i \Sigma_i$, we get a manifold $M_{\Sigma}$ which is a $2n$-punctured $S^3$, i.e., $S^3$ with interiors of $2n$ disjointly embedded $3$-balls removed. Corresponding to each  sphere $\Sigma_i$, we get two boundary spheres $\Sigma^+_i$ and $\Sigma^-_i$ in $M_S$.

\subsection{The Mapping class group of $M$ and $Out(\mathbb F_n)$}

\begin{definition}
The \emph{mapping class group} $\mathcal{M}ap(M)$ of $M$ is the group of isotopy classes of orientation-preserving diffeomorphisms of $M$.
\end{definition}

 It is due to Laudenbach that we can replace diffeomorphisms by homeomorphisms in the definition of the mapping class group of $M$ (see \cite{La1}). As we have identified $\mathbb F_n$ with $\pi_1(M)$, we have a natural homomorphism
   $$\Phi: \mathcal{M}ap(M) \to Out(\mathbb F_n),$$
      where $Out(\mathbb F_n)$ is the group of outer automorphisms of $\mathbb F_n$. By Laudenbach's work (\cite{La2}), the above homomorphism $\Phi$ is surjective and has finite kernel generated by Dehn-twists along essential embedded $2$-spheres in $M$ .

\subsection{Sphere complex}\label{spc}
We shall recall the following definition:
 \begin{definition}
 The \emph{sphere complex} $\mathbb S(M)$ associated to $M$  is the simplicial complex whose vertices are isotopy classes of essential embedded spheres in $M$. A finite collection of isotopy classes of embedded spheres in $M$ is deemed to span a simplex in $\mathbb S(M)$ if they can be realized disjointly (up to isotopy) in $M$.
 \end{definition}

 The mapping class group $\mathcal{M}ap(M)$ of $M$ acts simplicially on the sphere complex $\mathbb S(M)$. This yields a homomorphism
 $$\Phi': \mathcal{M}ap(M) \to Aut(\mathbb S(M)),$$
  where $Aut(\mathbb S(M))$ is the group of simplicial automorphisms of the sphere complex $\mathbb S(M)$. It follows from the work of Laudenbach that kernel of $\Phi$ and $\Phi'$ are equal. In particular, the action of $\mathcal{M}ap(M)$ on the sphere complex $\mathbb S(M)$ induces a simplicial action of $Out(\mathbb F_n)$ on the sphere complex. This yields a homomorphism
  $$\Phi'': Out(\mathbb F_n)\to Aut(\mathbb S(M)).$$
    In ~\cite{AS}, Aramayona and Souto have shown that this homomorphism is an isomorphism.

\section{Embedded spheres in $M$}\label{splittingsembed}

Group theoretically, embedded spheres in $M$ correspond to splittings of the fundamental group of $M$.
 We recall the following definitions.

Let $G_1$ and $G_2$ be two groups with presentations $ G_1=\langle S_1\mid R_1\rangle$ and $ G_2=\langle S_2\mid R_2\rangle$. Let $f_1 : H \rightarrow G_1$ and $f_2 : H\rightarrow G_2$ be two homomorphisms from group $H$.

\begin{definition}
The \emph{free product} $G_1*G_2$ of $G_1$ and $G_2$ is the group with a presentation $ \langle S_1 \bigsqcup S_2 \mid R_1\bigsqcup R_2\rangle$, where $\bigsqcup$ denotes disjoint union.
\end{definition}

\begin{definition}
The \emph{amalgamated free product} $G_1*_H G_2 $ is defined as $(G_1 * G_2)/N $, where $N$ is the normal subgroup of $G_1* G_2$ generated by elements of the form $f_1(h)(f_2(h))^{-1}$ for $h \in H $.
\end{definition}

\begin{definition}
Let $G $ be a group with presentation $ G=\langle S\mid R\rangle$ and let $\alpha$ be an isomorphism between two subgroups $H$ and $K$ of $G$. Let $t$ be a new symbol not in $S$ and define
   $$ G*_{\alpha} = \langle S,t \mid R ,  tht^{-1}=\alpha(h), \forall h\in H\rangle.$$
The group $G *_\alpha$ is called the HNN-extension of $G$ relative to $\alpha$. The original group $G$ is called the base group for the construction and the subgroups $H$ and $K$ are the associated subgroups. The new generator $t$ is called the stable letter. Sometimes, we also write $G*_H$ for $G*_\alpha$.
\end{definition}

\begin{definition}
A \emph{splitting of a group} $G$ consists either of proper subgroups $A$ and $B$ of $G$ and a subgroup $H$ of $A \cap B$ with $A\neq H \neq B$ such that the natural map $A *_H B \rightarrow G$ is an isomorphism or it consists of a subgroup $A$ of $G$ and subgroups $H_0$ and $H_1$ of $A$ such that there is an element $t$ of $G$ which conjugates $H_0$ to $H_1 $ and the natural map $A*_H \rightarrow G$ is an isomorphism.
\end{definition}

Note that the condition that $A \neq H \neq B$ is needed as otherwise any group $G$ would split over any subgroup $H$. For, one can always write $G = G *_H H$.

  We shall see how an embedded sphere in $M$ corresponds to a splitting of $G=\pi_1(M)=\mathbb F_n$.

 We shall recall the description of $M$ given in the beginning of Section \ref{model}. Fix a base point $x_0$ away from $\Sigma^0i$. For each $1\leq i\leq n$, consider the element $\alpha_i \in \pi_1(M)$ represented by a closed path $\gamma_i$ in $M$ starting from $x_0$, going to $A_i$, piercing $\Sigma^0_i$ and returning to the base point from $B_i$. We choose this closed path $\gamma_i$ such that it does not intersect any $\Sigma^0_j$, $j\neq i$. Then, as the complement of $\Sigma^0_i$ in $M$ is simply-connected, the collection $\{\alpha_1,\dots,\alpha_n\}$ forms a free basis of $G=\pi_1(M)$. So, we have $G=\langle \alpha_1,\dots,\alpha_n\rangle$. Any directed closed path in $M$ hitting the $\Sigma^0_i$ transversely represents a word in $\{\alpha_1,\dots,\alpha_n\}$ by the way it pierces each $\Sigma^0_i$ and the order in which it does so. Without a base point chosen, such a closed path represents a conjugacy class or equivalently the cyclic word in $\mathbb F_n$. We call the basis $\{\alpha_1,\dots,\alpha_n\}$ as a standard basis and the spheres $\Sigma^0_1,\dots,\Sigma^0_n$ as standard basic spheres in $M$.

\begin{proposition}
Let $S$ be an essential embedded sphere in $M$. Then, $S$ gives a splitting of the fundamental group of $M$.
\end{proposition}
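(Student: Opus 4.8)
The plan is to consider the two cases in the definition of a splitting, according to whether the essential sphere $S$ is separating or non-separating in $M$, and to realize the splitting via the Seifert--van Kampen theorem applied to a decomposition of $M$ along $S$. First I would take a regular (tubular) neighborhood $\nbd(S)\cong S^2\times(-1,1)$ of $S$ and let $M_S=M\sm S$ be the result of cutting $M$ along $S$; equivalently, remove the open neighborhood and note that the two new boundary spheres $S^+$ and $S^-$ are simply-connected, so that adding $3$-balls or collapsing each boundary sphere to a point does not change the fundamental group of any component. Because $S$ is essential, by Laudenbach's results neither $S^+$ nor $S^-$ bounds a ball in the relevant piece, so the resulting inclusion-induced maps on $\pi_1$ are injective (an inclusion of a submanifold whose complement has a sphere boundary component that does not bound a ball induces a $\pi_1$-injection, since $\pi_2$ surjects onto the kernel and Laudenbach rules out the nontrivial case); this injectivity is exactly what is required for the ``natural map is an isomorphism'' clause and, crucially, for the condition $A\neq H\neq B$.

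In the separating case, $M_S$ has two components $M_1$ and $M_2$, each a $3$-manifold with a single sphere boundary component; capping that sphere with a ball gives closed $3$-manifolds $\widehat M_1$, $\widehat M_2$ with $\pi_1(\widehat M_i)=\pi_1(M_i)=:A_i$. By van Kampen, $G=\pi_1(M)\cong A_1*_{\pi_1(S)}A_2=A_1*A_2$ since $\pi_1(S)=1$; thus $G$ splits as a free product, which is the amalgamated case with $H$ trivial. I must check $A_1\neq 1\neq A_2$: this is where essentiality of $S$ enters, since if one side were a ball then $S$ would be inessential in $M$ (a sphere bounding a ball on one side is isotopic into the other side and bounds there too by the structure of $M$, or more simply is null-homotopic hence inessential by Laudenbach). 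In the non-separating case, $M_S$ is connected with two boundary spheres $S^+,S^-$; capping both with balls gives a closed manifold $\widehat M$ with $\pi_1(\widehat M)=:A$, and $M$ is recovered from $\widehat M$ by removing two balls and gluing their boundary spheres, which is precisely the topological model for an HNN extension: $G\cong A*_{\pi_1(S)}=A*_1=A*\Z$, the stable letter $t$ being represented by a loop in $M$ crossing $S$ once. One checks $A\neq G$ (so the HNN data is proper) because the rank drops: $A$ is free of rank $n-1$ while $G$ has rank $n$, which follows from computing $H_1$ using the Mayer--Vietoris sequence for $M=M_S\cup\nbd(S)$, or from the fact that a non-separating sphere represents a primitive element of $H_2(M;\Z)$.

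The main obstacle, and the step deserving the most care, is establishing the $\pi_1$-injectivity of the pieces and the properness conditions ($A\neq H\neq B$ in the amalgamated case, $A\neq G$ in the HNN case) rigorously from essentiality of $S$ rather than merely asserting them; this is where Laudenbach's theorem (homotopic spheres are isotopic, equivalently an essential sphere is not null-homotopic) does the real work, because it converts the a priori homotopy-theoretic statement ``$\pi_2$ could contribute to the kernel of an inclusion'' into the geometric statement that such a contribution would force $S$ to bound a ball. A clean way to organize this is to note that each component of $M_S$, after capping, is again a connected sum of copies of $S^2\times S^1$ (this follows from the prime decomposition of $3$-manifolds, since $M$ has no other prime summands), so each $A_i$ (or $A$) is a free group, and then the rank count via $H_1$ — together with the observation that essential means the homology class or the homotopy class is nontrivial in the appropriate sense — suffices to verify every properness hypothesis. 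I would close by remarking that the splitting so produced is exactly the one dual to $S$ in the Bass--Serre sense, which is the statement needed for the later sections; the converse direction (every splitting comes from a sphere) is deferred to Section~\ref{GOG} as announced.
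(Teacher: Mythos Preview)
Your proof is correct and follows the same overall strategy as the paper: split into the separating and non-separating cases and apply van Kampen after cutting along $S$. In the non-separating case your organisation differs slightly: you cap the two boundary spheres of $M\sm S$ with balls and read off $G\cong A*\Z$ directly as an HNN extension over the trivial group, whereas the paper instead tubes the two boundary spheres of a regular neighbourhood of $S$ together along an arc in $M\sm S$ to produce an auxiliary \emph{separating} sphere $S'$, thereby exhibiting $M$ as $(S^2\times S^1)\sharp M'$ and reading off $G=G'*\langle t\rangle$ from the connected-sum picture. These are equivalent and equally standard. You are also more explicit than the paper about verifying the properness conditions ($A_1\neq 1\neq A_2$ in the separating case, $A\neq G$ in the HNN case) from essentiality of $S$; the paper's proof leaves these checks implicit.
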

\begin{proof}
If $S$ separates $M$, then using Van Kampen's theorem, we get a splitting of the fundamental group $G$ of $M$ as a free product of its subgroups.

Now, suppose $S$ is non-separating. Choose a regular neighborhood $V = S \times [-1,1]$ of $S$ and an embedded path $\gamma$ in $M - V$ from a point of $S \times {-1}$ to a point $S \times {1}$. The sphere $S'$ which is the connected sum of $S \times {-1}$ with $S \times {1}$ along with the boundary of a regular neighborhood $U$ of $\gamma$, clearly bounds $U \cup V$ in $M$. We have $M = U \cup V \cup (M- (U\cup V))$. The set $U \cup V$ is a one punctured $(S \times S^1)$ with boundary $S'$ and $(M- (U\cup V))$ is a one punctured $3$-manifold $M'$ with boundary $S'$. Thus, $M$ is a connected sum of $S^2 \times S^1$ with the $3$-manifold $M'$. Then, we get a splitting of $G=G'* \langle t\rangle$, where $\pi_1(M')=G'$ and $\pi_1(S^2\times S^1)= \langle t\rangle$. Thus, $G$ can be viewed as an HNN-extension of $G'$ over the trivial subgroup $\{1\}$ of $G'$.
\end{proof}

 Note that when we apply Van Kampen's theorem, we choose a base point $y_0$ on the sphere $S$. If we choose a path $\gamma$ from  $x_0$ to $y_0$ in $M$, then we get a splitting, say $L$, of $G$ which consists of subgroups of $G$. Choosing another path $\gamma'$ from $x_0$ to $y_0$, we get a splitting of $G$ conjugate to $L$. So, the sphere $S$ gives a splitting of $G$ up to conjugacy.

Now, we shall see the converse,

\begin{proposition}\label{splittings}
Given a splitting of $G$, there exists an embedded sphere $S$ in $M$ which gives that splitting of $G$.
\end{proposition}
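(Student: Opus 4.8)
The plan is to normalize the given splitting of $G=\pi_1(M)$ to one of two standard shapes, realize each standard shape by an explicit embedded sphere, and then transport that sphere by a diffeomorphism of $M$. Since $G\cong\mathbb F_n$ is free, every subgroup of $G$ is free, and by the classical fact that a free group splits only over the trivial subgroup (e.g.\ Serre, \emph{Trees}, or Stallings' theorem on ends) the edge group of the given splitting must be trivial. Hence the splitting is either a free product $G=A*B$ with $A$ and $B$ proper, hence nontrivial, subgroups, or an HNN decomposition $G=A*_{\{1\}}=A*\langle t\rangle$. By Grushko's theorem the free factors are finitely generated, of ranks $k$ and $n-k$ with $1\le k\le n-1$ in the first case, and of rank $n-1$ (together with the infinite cyclic $\langle t\rangle$) in the second.

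Next I would exhibit a model sphere for each standard splitting. Writing $M$ as the connected sum $(\sharp_k S^2\times S^1)\sharp(\sharp_{n-k}S^2\times S^1)$, the connect-sum $2$-sphere $S_0$ is separating, and Van Kampen's theorem shows that it gives the free product $G=\langle\alpha_1,\dots,\alpha_k\rangle*\langle\alpha_{k+1},\dots,\alpha_n\rangle$ in the standard basis $\{\alpha_1,\dots,\alpha_n\}$ of Section~\ref{splittingsembed}. For the HNN case, the preceding proposition already shows that the standard non-separating basic sphere $\Sigma^0_n$ gives $G=\langle\alpha_1,\dots,\alpha_{n-1}\rangle*\langle\alpha_n\rangle$, viewed as an HNN extension of $\langle\alpha_1,\dots,\alpha_{n-1}\rangle$ over $\{1\}$ with stable letter $\alpha_n$.

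I would then match the given splitting to the model by an automorphism and realize that automorphism geometrically. In the free-product case, choose free bases $a_1,\dots,a_k$ of $A$ and $b_1,\dots,b_{n-k}$ of $B$; because the natural map $A*B\to G$ is an isomorphism, the concatenation $a_1,\dots,a_k,b_1,\dots,b_{n-k}$ is a free basis of $\mathbb F_n$, so there is $\psi\in Aut(\mathbb F_n)$ with $\psi(\alpha_i)=a_i$ for $i\le k$ and $\psi(\alpha_{k+j})=b_j$, and $\psi$ carries the standard free product above to the prescribed $A*B$; the HNN case is identical, using a free basis of $A$ together with $t$. Since the homomorphism $\mathcal{M}ap(M)\to Out(\mathbb F_n)$ is surjective (Laudenbach), $\psi$ is induced by an orientation-preserving diffeomorphism $f$ of $M$, and then $f(S_0)$ — respectively $f(\Sigma^0_n)$ — is an embedded sphere in $M$ giving the prescribed splitting up to conjugacy, which, as noted after the preceding proposition, is all that a sphere ever determines.

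I expect the two points requiring care to be: first, the reduction to a trivial edge group, which rests on the structure theory of free groups rather than on anything special about $M$; and second, the need to recover the \emph{actual} subgroups $A,B\le G$, not merely their isomorphism types. The second point is exactly what the observation ``free bases of the factors concatenate to a free basis of $\mathbb F_n$'' handles, since it makes $A$ literally equal to $\psi(\langle\alpha_1,\dots,\alpha_k\rangle)$ for the automorphism $\psi$ chosen above. Once the splitting is in standard form, the remaining steps are routine.
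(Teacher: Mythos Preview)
Your argument follows essentially the same strategy as the paper's: pick free bases of the factors, concatenate them to a basis of $G$, realize the change-of-basis automorphism by a self-homeomorphism of $M$, and transport a model sphere. The paper uses the inverse convention (it sends the \emph{given} basis to the standard one and then applies $h^{-1}$ to a standard separating sphere, respectively to $\Sigma^0_n$), and it invokes the realization of Nielsen automorphisms by base-point-preserving homeomorphisms rather than Laudenbach's surjection onto $Out(\mathbb F_n)$; but these are cosmetic differences, and your remark that a sphere only determines a splitting up to conjugacy handles the discrepancy.

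One genuine correction: the assertion that ``a free group splits only over the trivial subgroup'' is false. For example,
\[
F_2=\langle a\rangle *_{\langle a^2\rangle}\langle a^2,b\rangle
\]
is a nontrivial amalgam over $\mathbb Z$ (both factors are proper, and neither equals $\langle a^2\rangle$), and
\[
F_2=\langle a,\,bab^{-1}\rangle *_{\langle a\rangle}
\]
is an HNN extension over $\mathbb Z$ with stable letter $b$. Stallings' theorem and Serre's theory say a free group \emph{admits} a free action on a tree, not that \emph{every} action is free. The paper makes the same unjustified claim in the HNN case and, in the amalgam case, simply starts with ``Suppose $G=F_1*F_2$''---so it, too, is really only treating \emph{free} splittings. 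Read that way (which is the intended converse to the previous proposition, since spheres always give free splittings), your proof is complete; but you should replace the appeal to Serre/Stallings by the observation that the proposition is meant for splittings over the trivial subgroup.
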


\begin{proof}
Suppose $G=F_1*F_2$. As subgroups of free group are free, both $F_1$ and $F_2$ are free. Choose free bases $\{a_1,\dots,a_m\}$ and $\{b_{m+1},\dots,b_{m+k}\}$ of $F_1$ and $F_2$, respectively. The set $ \{a_1,\dots,a_m,b_{m+1},\dots,b_{m+k}\}$ forms a free basis for $G$. Therefore, $m+k=n$. Any two bases of a free group
are equivalent in the sense that there exists an automorphism of that free group sending one basis to another. So, we have an automorphism $\phi$ of $G$ sending the basis $\{a_1,\dots,a_m,b_{m+1},\dots,b_n\}$ to the standard basis with $\phi(a_i)=\alpha_i$, for $1\leq i\leq m$ and $\phi(b_{m+j})=\alpha_{m+j}$, for $1\leq j\leq k$.

Every automorphism of a free group is finite composition of Nielsen automorphisms and every Nielsen automorphism of $G$ is induced by a homeomorphism of $M$ which fixes the base point $x_0$ (see ~\cite{LS},~\cite{St},~\cite{St3}). Thus, every automorphism of $G$ is induced by a homeomorphism of $M$ fixing the base point. Let $h$ be a homeomorphism of $M$ which fixes the base point and induces the automorphism $\phi$ on $G$. The element $\phi(a_i)=h_*(a_i)=\alpha_i$, for $1\leq i\leq m$, corresponds to the basic standard sphere $\Sigma^0_i$ and $\phi(b_{m+j})=h_*(b_{m+j})=\alpha_{m+j}$, for $1\leq j\leq k$ corresponds to the basic standard sphere $\Sigma^0_{m+j}$. We can choose an embedded sphere $S$, disjoint from all $\Sigma^0_i$ such that it partitions the collection of basic standard spheres into two sets, namely, $\{\Sigma^0_i,\dots,\Sigma^0_m\}$ and $\{\Sigma^0_{m+1},\dots,\Sigma^0_{m+k}\}$ and $x_0\in S$. Then, $S$ gives a free splitting of $G=A*B$, where $A=\langle \alpha_1,\dots,\alpha_m \rangle$ and $B=\langle \alpha_{m+1},\dots,\alpha_{m+k}\rangle$. Now, the sphere $h^{-1}(S)=S'$ gives partition of the collection of spheres $\{h^{-1}(\Sigma^0_1),\dots, h^{-1}(\Sigma^0_{m+k})\}$ into two sets $\{h^{-1}(\Sigma^0_i),\dots,h^{-1}(\Sigma^0_m)\}$ and $\{h^{-1}(\Sigma^0_{m+1}),\dots,h^{-1}(\Sigma^0_{m+k})\}$. The sphere structure $\{h^{-1}(\Sigma^0_1),\dots, h^{-1}(\Sigma^0_{m+k})\}$ corresponds to the basis $\{a_1,\dots,a_m,b_{m+1},\dots,b_{m+k}\}$.  Then, by applying Van-Kampen's theorem, we can see that $S'$ gives the splitting $G=F_1*F_2$ as $h(x_0)=x_0\in S'$.

If we choose a sphere $S$ such that the base point $x_0$ does not lie in $S$, then we choose a path $\gamma$ from $x_0$ to a point $y_o$ in $S$ such that $\gamma$ does not intersect any standard basic sphere. We choose curves $\alpha'_i$ starting from $y_0$, going to $A_i$, piercing $\Sigma^0_i$, and returning to the base point from $B_i$ without hitting any $\Sigma^0_j$, $j\neq i$. Note the homotopy class of closed curve the $\gamma*\alpha'_i*\bar{\gamma}$ gives the generator $\alpha_i$. The homotopy class of closed curve $h^{-1}(\gamma*\alpha'_i*\bar{\gamma})$ is $a_i$. From this, we can see that the sphere $S'$ gives the splitting $G=F_1*F_2$.

Note that if $G$ is an HNN-extension of a subgroup $G'$ of $G$ relative to the subgroups $H,K$ of $G'$ and an isomorphism $\theta :H\rightarrow K$, then $H=K=\{1\}$ and $G'$ is a subgroup of rank $n-1$ as $G$ is a free group of rank $n$. Thus, $G= G'*\langle t \rangle$, where $t\in G$.  We choose a basis $\{c_1,\dots,c_{n-1}\}$ of $G'$. The set $\{c_1,\dots,c_{n-1},t\}$ forms a basis of $G$. Then, we have an isomorphism $\phi'$ of $G$ sending the basis $\{c_1,\dots,c_{n-1},t\}$ to the standard basis with $\phi'(c_i)=\alpha_i$, for $1\leq i\leq n-1$ and $\phi'(t)=\alpha_n$. Let $h'$ be a homeomorphism of $M$ which fixes the base point and induces the automorphism $\phi'$ on $G$. The element $\phi'(c_i)=h'_*(c_i)=\alpha_i$, for $1\leq i\leq n-1$, corresponds to the basic standard sphere $\Sigma^0_i$ and $\phi'(t)=h'_*(t)=\alpha_n$ corresponds to the basic sphere $\Sigma^0_n$. The sphere structure $\{h'^{-1}(\Sigma^0_1),\dots,h'^{-1}(\Sigma^0_{n-1}),h'^{-1}(\Sigma^0_{n})\}$ corresponds to the basis $\{c_1,\dots,c_{n-1},t\}$ of $G$. Now, one can easily see that the sphere $h'^{-1}(\Sigma^0_n)$ gives a splitting of $G$ as an HNN-extension of $G'$ over the trivial subgroup $\{1\}$.
\end{proof}

\subsection{Isotopy classes of embedded spheres and conjugacy classes of corresponding splittings of $G$}
Now, we shall see that the isotopy classes of embedded spheres in $M$  correspond to the conjugacy classes of splittings of $G$.

We need the following: For a set $E \subset G$, we denote the complement of $E$ by $E^*$ and by $E^{(*)}$, we mean one of the sets $E$ and $E^*$.

\begin{definition}
Two subsets $E$ and $E'$ of the group $G$ are said to be \emph{almost equal} if their symmetric difference is finite.
\end{definition}
A set $E$ is said to be \emph{non-trivial} if both $E$ and $E^*$ are infinite.

\begin{definition}
A set $E \subset G$ is said to be \emph{almost invariant} if $E$ is almost equal to $Eg$ for all $g \in G$.
\end{definition}

 An equivalent condition in terms of the Cayley graph $\Gamma(G)$ is that the set $\delta E$ of edges of $\Gamma(G)$ with one vertex in $E$ and the other in
$E^*$ is finite. This is due to Cohen, ~\cite{CO}.
\begin{definition}
 Two almost invariant sets $E$ and $E'$ are said to be \emph{small} if $E\cap E'$ is finite.
 \end{definition}

A description of $\M$ related to the description of $M$ given in the beginning of the Section \ref{model} is as follows:  Let $\Gamma(G)$  be the Cayley graph of $G$ with respect to the standard basis $\{\alpha_1,\dots,\alpha_n\}$. Then, $\Gamma(G)$ is a tree with vertices $G$. The space $\widetilde{M}$ is obtained by taking a copy $gP$ of $P$ for each vertex $g$ of $\Gamma(G)$ and identifying $gS_i$ with $g\alpha_i T_i$ using $\varphi_i$, for each $g\in G$ and for each generator $\alpha_i$, $1 \leq i \leq n$. We observe that by the Hurewicz Theorem (~\cite{Ht1}), $\pi_2(M)=\pi_2(\widetilde{M})=H_2(\widetilde{M})$.

There is a natural embedding of $\Gamma(G)$ in $\M$ and in particular, $G$ can be identified with a subset of $\M$. Note that the set of ends of $\Gamma(G)$ can be identified with the set of ends of $\M$. Suppose that $\widetilde{S} \subset \M$ is an embedded sphere. As $\M$ is simply-connected, $\widetilde{S}$ separates $\M$ into two non-compact complementary components and hence gives a partition of the set $E(\M)$ of ends of $\M$ into two non-empty open subsets of $E(\M)$. Let $X_1$ and $X_2$ be the closures of the complementary components of $\widetilde{S}$. Let $E_i = X_i\cap G$. As only finitely many edges of the Cayley graph $\Gamma(G)$ intersect $\widetilde{S}$, the set $\delta E_1$ of edges of $\Gamma(G)$ with one vertex in $E_1$ and the other in $E^*_1$ is finite. This implies that the sets $E_1$ and $E_2$ form complementary almost invariant subsets of $G$.  The sets $E^{(*)}_ 1$ are called the almost invariant sets of $\widetilde{S}$.

Note that the sets $E(X_i)$ of ends of the sets $X_i$ are determined by the sets $E_i$. Hence, for two embedded spheres $\widetilde{S}$ and $\widetilde{S'}$, the corresponding almost invariant sets are almost equal if and only if $\widetilde{S}$ and $\widetilde{S'}$ are homologous as homology classes of embedded spheres in $\M$ are determined by the partitions of the set $E(\M)$ into two non-empty open sets, see ~\cite{GP1}.

An embedded sphere $S$ in $M$ lifts to a collection of embedded spheres in $\M$. The collection of almost invariant sets in $G$ of the lifts of $S$ to $\M$ is called the collection of almost invariant sets of the splitting of $G$ corresponding to $S$.

Consider the lifts $\widetilde{S}$ and $\widetilde{S'}$ to $\M$ of embedded spheres $S$ and $S'$ in $M$. Let $E^{(*)}$ and ${E'}^{(*)}$ be almost invariant sets associated to $\widetilde{S}$ and $\widetilde{S'}$ respectively. If $\widetilde{S}$ and $\widetilde{S'}$ are isotopic, then their almost invariant sets are almost equal. Therefore, two of the four sets $E^{(*)}\cap {E'}^{(*)}$ are small. Then, by ~\cite[Lemma 2.3]{SS}, the two splittings of $G$ given by $S$ and $S'$ are conjugate. For more details of the splittings of groups and their almost invariant sets, refer ~\cite{SS}.

Conversely, suppose that the splitting of $G$ associated to $S'$ is conjugate of the splitting of $G$ associated to $S$ by an element $g\in G$. Let $E^{(*)}$ be the almost invariant sets associated to some lift $\widetilde{S}$ of $S$ to $\M$. Then, the sets $gEg^{-1}$ and $gE^*g^{-1}$ are the almost invariant sets associated to some lift $\widetilde{S'}$ of $S'$ to $\M$. As $E$ (respectively, $E^*$) is almost invariant subset of $G$, $Eg^{-1}$ (respectively, $E^*g^{-1}$) is almost equal to $E$ (respectively, $E^*$). This implies that $gE^{(*)}g^{-1}$ is almost equal to $gE^{(*)}$ which are almost invariant sets associated to the translate $g\widetilde{S}$ of $S$. Therefore, $\widetilde{S'}$ and $g\widetilde{S}$ are homologous and hence isotopic. This implies $S$ and $S'$ are homotopic and hence isotopic by Laudenbach's theorem.

Thus, in group theoretic terms, isotopy classes of embedded spheres in $M$ correspond to conjugacy classes of splittings of the free group $G=\pi_1(M)$.

\section{ Graph of groups decomposition of $G=\pi_1(M)$}\label{GOG}

\subsection{Graph of groups}

Recall the following:
\begin{definition}\label{graph} A \emph{graph} $\mathbb T$ consists of two sets $E(\mathbb T)$ and $V(\mathbb T)$, called the edges and vertices of $\mathbb T$, a mapping from $E(\mathbb T)$ to $E(\mathbb T)$ denoted $e \mapsto \bar e$, for which $e\neq \bar e$ and $\bar{\bar e} = e$ and a mapping from $E(\mathbb T)$ to $V(\mathbb T) \times V(\mathbb T)$, denoted $ e \mapsto (o(e),t(e))$, such that $\bar e \mapsto (t(e),o(e))$ for every $e \in E(\mathbb T)$.
\end{definition}

A graph $\mathbb T$ has an obvious geometric realization $|\mathbb
T|$ with vertices $V(\mathbb T)$ and edges corresponding to pairs
$(e,\bar{e})$. When we say that $\mathbb T$ is connected or has
some topological property, we shall mean that the realization of
$\mathbb T$ has the appropriate property.

\begin{definition}
A \emph{graph of groups} $\mathcal G$ consists of a graph $\Gamma$ together with a function assigning to each vertex $v$ of $\Gamma$ a group $G_v$ and to each edge $e$ a group $G_e$, with $G_{\bar{e}} =G_e$ and an injective homomorphism $f_e: G_e \rightarrow G_{\del_0 e}$.
\end{definition}

\subsection{Fundamental group of a graph of groups}\label{funda}

We shall see an algebraic definition of the fundamental group of graph of group.

First, choose a spanning tree $\mathbb T$ in $\Gamma$.
\begin{definition}
The \emph{fundamental group of $\mathcal G$} with respect to $\mathbb T$, denoted $\mathcal G_\Gamma(\mathbb T)$, is defined as the quotient of the free product  $(\ast_{v\in \Gamma(V)} (G_v) \ast F(E)$, where $F(E)$ is a free group with free basis $E=Edge(\Gamma)$, subject to the following relations:
\begin{enumerate}
\item $ \overline{e} f_e(g)e=f_{\overline{e}}(g)$, for every $e$ in $E$ and every $g\in G_e$.

\item $ \overline e e=1$, for every $e\in E$.

\item $e = 1$, for every edge $e$ of the spanning tree $\mathbb T$.

\end{enumerate}
\end{definition}

One can see that the above definition is independent of the choice of the spanning tree $\mathbb T$ of $\Gamma$.

Observe that in the case when $\Gamma$ has just one pair $(e,\bar e)$ of edges and two vertices  $v_1$ and $v_2$ and  if groups associated to $v_1$, $v_2$ and $(e,\bar e)$ are $A$, $B$ and $C$ respectively, the fundamental group $\pi_1(\mathcal G)$ is $A*_C B$. In the case, when $\Gamma$ has just one pair $(e,\bar e)$ of edges and one vertex $v$ and if the associated groups are $C$ and $A$ respectively, then the fundamental group $\pi_1(\mathcal G)$ is $A*_C$.

We call a graph of groups $\mathcal G$ with the fundamental group equal to $G=\pi_1(M)$ a graph of groups structure of $G$ or graph of groups decomposition of $G$. A splitting of a group $G=\pi_1(M)$ is a graph of groups $\mathcal G$ with a connected graph $\Gamma$ containing only one edge such that the fundamental group $\mathcal G_\Gamma$ is $G$.  Note that for a graph of groups structure of $G$, edge groups are trivial. A graph of groups structure of $G$  on a graph with one edge and two vertices corresponds to a free product of vertex groups. A graph of groups structure of $G$  on a single vertex with a loop corresponds to an HNN- extension.

Let $\mathcal G$ be a graph of groups structure of $G$ with the underlying graph $\Gamma$ containing $k$ edges. We number the edges as $1,2,\dots, k$. We call the spiltting of $G$ obtained by collapsing all the edges except the $i$-th edge as $i$-th elementary splitting of $\mathcal G$ (or elementary splitting associated to $i$-th edge of $\Gamma$).

Now, we shall show that given any graph of groups structure $\mathcal G$ of $G=\pi_1(M)$ up to conjugacy of vertex groups, there is a corresponding isotopy class of a sphere system in $M$. We shall recall the following:

\subsection{Graph of topological spaces}
We define a \emph{graph $\chi$ of topological spaces} or of spaces with preferred base point as we defined graph of groups: here, it is not necessary for the map $X_e \rightarrow X_{\del_0 e}$ to be injective, as we can use the mapping cylinder construction to replace the maps by inclusions and this does not alter the total
space defined below.  Given a graph $\chi$ of spaces, we can define total space $\chi_{\Gamma}$ as the quotient of
$$\cup \{X_v : v\in V(\Gamma)\}  \cup \{\cup \{X_e \times I :e \in E(\Gamma)\}\} $$
by  identifications,
$$X_e \times I \rightarrow X_{\bar e} \times I \text{  by  }  (x,t) \rightarrow (x,1-t)$$

$$X_e \time 0 \rightarrow X_{\del_0 e} \text{  by  }   (x,0) \rightarrow f_e(x).$$

If $\chi$ is a graph of (connected) based spaces, then by taking fundamental groups we obtain a graph of groups $\mathcal G$ with the same underlying  graph $\Gamma$. The fundamental group $\pi_1(\mathcal G)$ of the graph of groups  $\mathcal G$ is defined to be the fundamental group of the total space $\chi_\Gamma$.

Note that the fundamental group of the graph of groups  $\mathcal G$ defined in Subsection ~\ref{funda} is the same (isomorphic) as defined here.  So, we now denote $\pi_1(\mathcal G)$ by $\mathcal G_\Gamma$. One can show that $\mathcal G_\Gamma$ is independent of the choice of $\chi$. Note that each map $G_v\to \mathcal G_\Gamma$ is injective.

\subsection{Graph of groups structure of $\pi_1(M)$ associated to a sphere system}
Given a sphere system $S=\cup_i S_i$ in $M$, we can associate a graph of groups $\mathcal G$ to the sphere system  $S$ with underlying graph $\Gamma$ such that the fundamental group $\mathcal G_\Gamma$ of $\mathcal G$ is $G=\pi_1(M)$ as follows:  The vertices of $\Gamma$ are the closures of components of $M - S$ and edges are the spheres $S_i$. An edge $e$ in $\Gamma$ is adjacent to $v$ if the sphere corresponding to $e$ is a boundary component of the component $X_v$ corresponding $v$ and the maps from the edges to vertices are inclusion maps of spheres. This gives the graph $\chi$ of topological spaces  with the same underlying graph $\Gamma$. We can consider based spaces by fixing a base point in each space which is a complementary component of $M-S$. Then, by taking fundamental groups of the spaces associated to vertices and edges of $\Gamma$, we get a graph of groups $\mathcal G$ with the underlying graph $\Gamma$. The edge groups $G_e$ are trivial. We can see that the total space $\chi_{\Gamma}$ is $M$ and $\mathcal G_\Gamma =\pi_1(M)$. The fundamental group $G_v$ of $X_v$ can be viewed as subgroup (up to conjugacy) of $G$ as the map $G_v\to G$ is injective. Note that $\Gamma$ has no terminal vertex with trivial vertex group and no two elementary splittings of $\mathcal G$ are conjugates of each other as sphere components of $S$ are essential and no two spheres components are isotopic.

Now, we prove the following theorem:

\begin{theorem}\label{graphsphere}
 Let $\mathcal G$ be a graph of groups structure of $G=\pi_1(M)$ with underlying graph $\Gamma$ such that $\Gamma$ has no terminal vertex with trivial vertex group associated to it and no two elementary splittings of $\mathcal G$ are conjugates of each other. Then, there exist a system of embedded spheres in $M$ such that the graph of groups structure of $G$  given by this sphere system is $\mathcal G$ (up to conjugacy of vertex groups).
 \end{theorem}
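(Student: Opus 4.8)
The plan is to realise $\mathcal G$ by building a closed $3$-manifold directly out of $\mathcal G$ and then recognising it as $M$. Since the edge groups of $\mathcal G$ are trivial, there is an isomorphism $G\cong G_{v_1}\ast\cdots\ast G_{v_s}\ast F$, where $v_1,\dots,v_s$ are the vertices of $\Gamma$ and $F$ is free of rank $b_1(\Gamma)$; being a free factor of the free group $G$ of rank $n$, each vertex group $G_{v_i}$ is then free of some finite rank $m_i$, with $\sum_i m_i+b_1(\Gamma)=n$. For each vertex $v$ of valence $d_v$ (a loop at $v$ counted twice), I would take $X_v$ to be a copy of $\sharp_{m_v}(S^2\times S^1)$ with the interiors of $d_v$ disjoint closed balls removed, together with a choice of base point and an identification $\pi_1(X_v)\cong G_v$; thus $X_v$ is a compact orientable $3$-manifold bounded by $d_v$ spheres. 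After orienting each $X_v$, I glue, for every edge $e$ of $\Gamma$, a prescribed boundary sphere of $X_{o(e)}$ to a prescribed boundary sphere of $X_{t(e)}$ (when $e$ is a loop, these are two distinct boundary spheres of the same $X_v$) by an orientation-reversing diffeomorphism, obtaining a closed oriented $3$-manifold $X$; let $S_e\subset X$ denote the image of the glued pair. By construction the $S_e$ are disjoint embedded spheres and $X$ cut along $\bigcup_e S_e$ is $\bigsqcup_v X_v$.

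First I would compute $\pi_1(X)$: reinserting a collar $S^2\times I$ at each $S_e$ exhibits $X$ as the total space of a graph of spaces over $\Gamma$ with vertex spaces $X_v$, edge spaces $S^2$, and attaching maps the inclusions of the prescribed boundary spheres. Because $\pi_1(S^2)=1$, the associated graph of groups is exactly $\mathcal G$, so by the discussion in Subsection~\ref{funda} one has $\pi_1(X)\cong\mathcal G_\Gamma=G$, free of rank $n$.

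The core of the argument, and the step I expect to be the main obstacle, is the identification $X\cong M=\sharp_n(S^2\times S^1)$. I would carry it out using the standard fact that, for a connected orientable $3$-manifold $Y$, gluing two of its boundary spheres to each other has the effect of connect-summing $\widehat Y$ with $S^2\times S^1$ (equivalently: delete two open balls from $\widehat Y$ and identify the resulting spheres), while gluing a boundary sphere of $Y_1$ to one of $Y_2$ connect-sums $\widehat Y_1$ with $\widehat Y_2$; here $\widehat{(\cdot)}$ means fill in all boundary spheres with balls. Performing first the gluings along a spanning tree of $\Gamma$ (each of which joins two distinct current pieces) and then the $b_1(\Gamma)$ remaining gluings (each of which identifies two boundary spheres of a single piece), one gets $X\cong\bigl(\sharp_i\widehat{X_{v_i}}\bigr)\,\sharp\,\sharp^{\,b_1(\Gamma)}(S^2\times S^1)\cong\sharp_{\,\sum_i m_i+b_1(\Gamma)}(S^2\times S^1)=M$; alternatively, one may argue abstractly from the prime decomposition theorem, the sphere theorem and the Poincar\'e conjecture that a closed orientable $3$-manifold with free fundamental group of rank $n$ must be $\sharp_n(S^2\times S^1)$. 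Fixing a homeomorphism $X\cong M$ and transporting the $S_e$, one obtains a finite collection of disjoint embedded spheres in $M$ whose associated graph of groups — vertices the closures of the components of $M\setminus\bigcup_e S_e$, edges the $S_e$ — is $\mathcal G$ up to conjugacy of the vertex groups, the conjugacy ambiguity being precisely the one already built into the graph-of-spaces description.

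Finally I would check that $\{S_e\}_{e\in E(\Gamma)}$ is a genuine sphere system, which is where the two hypotheses on $\mathcal G$ enter. If some $S_e$ bounded a ball $B$, then the sub-graph of $\Gamma$ carried by $B$ would be a tree with all vertex groups trivial, attached to the rest of $\Gamma$ only through $e$; chasing its leaves shows this forces a terminal vertex of $\Gamma$ with trivial vertex group, contrary to hypothesis, so every $S_e$ is essential. If $S_e$ and $S_{e'}$ were parallel they would cobound an $S^2\times I$, hence the elementary splitting of $\mathcal G$ at $e$ and that at $e'$ would be realised by isotopic spheres and would therefore be conjugate splittings, by the correspondence between isotopy classes of embedded spheres and conjugacy classes of splittings established in Section~\ref{splittingsembed} — again contrary to hypothesis, so no two of the $S_e$ are parallel. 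Hence $\{S_e\}_{e\in E(\Gamma)}$ is the required sphere system. (An induction on the number of edges of $\Gamma$, realising an elementary collapse of $\mathcal G$ by the inductive hypothesis and then splitting one vertex manifold via Proposition~\ref{splittings}, is also possible, but the case in which every edge of $\Gamma$ is a loop is awkward, as collapsing may then destroy the standing hypotheses; this is why I would prefer the direct construction above.)
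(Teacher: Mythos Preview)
Your approach is essentially the paper's: build a model $3$-manifold out of $\mathcal G$ by taking punctured $\sharp_{m_v}(S^2\times S^1)$'s at the vertices and gluing boundary spheres along edges, then identify the result with $M$. The paper calls this model $M'$ and your $X$ is the same object. Your verification that $X\cong\sharp_n(S^2\times S^1)$ via connect-sum calculus, and your check that the $S_e$ form a genuine sphere system, are both more explicit than what the paper writes (the paper simply asserts both), and they are correct.

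There is, however, one genuine gap. When you write ``Fixing a homeomorphism $X\cong M$ and transporting the $S_e$, one obtains \dots\ $\mathcal G$ up to conjugacy of the vertex groups,'' this is not true for an \emph{arbitrary} homeomorphism. You have a specific isomorphism $\alpha:\pi_1(X)\to G$ coming from the graph-of-spaces description, and a homeomorphism $f:X\to M$ induces another isomorphism $f_*:\pi_1(X)\to G$. The sphere system $f(\bigcup_e S_e)$ realises the graph of groups $\mathcal G$ twisted by the automorphism $f_*\circ\alpha^{-1}\in\mathrm{Aut}(G)$; its vertex groups are $f_*\alpha^{-1}(G_v)$, which are isomorphic to the $G_v$ but in general not conjugate to them in $G$. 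So ``up to conjugacy'' fails unless $f_*\circ\alpha^{-1}$ is inner.

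This is exactly what the paper's apparatus with $\phi$, $h$, $h_1$, $h_2$ is for: starting from any $h_1:M'\to M$, the paper post-composes with a self-homeomorphism of $M$ realising the required automorphism of $G$ (using that every automorphism of $\pi_1(M)$ is induced by a homeomorphism of $M$), so that the final identification carries the model vertex groups to the given $G_v$'s up to conjugacy. Your argument becomes correct once you insert the same step: after producing some $f_0:X\to M$, compose with a self-homeomorphism of $M$ inducing $\alpha\circ(f_{0})_*^{-1}$, and use that composite as your identification. With this adjustment your proof and the paper's are the same argument, yours written more cleanly at the topological level and the paper's more explicitly at the level of bases and Nielsen automorphisms.
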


 \begin{proof}
Note that as $G$ is a free group and $G$ is the fundamental group of $\mathcal G$, each edge group must be trivial and all the groups associated to vertices are free groups (subgroups of $G$).

First, choose a spanning tree $\mathbb T$ in $\Gamma$. The fundamental group $G=\mathcal G_\Gamma$ of $\mathcal G$ with respect to $\mathbb T$, is $\ast_{v\in \Gamma(V)} (G_v) \ast F(E')$, where $F(E')$ is a free group with free basis $E'$ which contains edges of $\Gamma$ which are not in the spanning tree $\mathbb T$. Choose a basis $A_v=\{a^v_1,\dots,a^v_{n_v}\}$ of the subgroup $G_v$ of $G$ associated to vertex $v$. Let $B=\{b_1,\dots,b_{n'}\}$ be a free basis of $F(E')$. The set $C=\cup_v A_v \cup B$ forms a free basis of $G=\pi_1(M)$. Suppose $\Gamma$ has $l$ vertices $v_1,\dots,v_l$. We write the set $C$ as $\{ a^{v_1}_1\dots
a^{v_1}_{n_{v_1}},a^{v_2}_{n_{v_1}+1},\dots,a^{v_2}_{n_{v_1}+n_{v_2}},\dots ,a^{v_l}_{n{v_1}+\dots+n_{v_l}},b_{n{v_1}+\dots+n_{v_l}+1},\dots,b_n\}$.
We have an automorphism $\phi$ of $G$ sending the basis $C$ to the standard basis with $\phi(a^{v_1}_i)=\alpha_i$, for $1\leq i\leq n_{v_1}$, $\phi(a^{v_2}_i)=\alpha_i$, for $n_{v_1}< i\leq n_{v_1}+n_{v_2}$, \dots, $\phi(a^{v_l}_i)=\alpha_i$, for $n_{v_1}+\dots+n_{v_{l-1}}< i\leq n_{v_1}+\dots+n_{v_l}$ and $\phi(b_i)=\alpha_i$, for $n_{v_1}+\dots+n_{v_l}< i\leq n$. Every automorphism of $G$ is induced by a homeomorphism of $M$ fixing the base point. Let $h$ be a homeomorphism of $M$ which fixes the base point and induces the automorphism $\phi$ on $G$.

For the vertex $v_i$, let $e^{v_i}_1,\dots,e^{v_i}_{k_{v_i}}$ be the edges adjacent to $v_i$ such that the edge $e^{v_i}_j$ is not a loop bases at $v_i$. Let $e'^{v_i}_1,\dots,e'^{v_i}_{k'_{v_i}}$ be the edges which are loops based at $v_i$. As the vertex group $G_{v_i}$ is a free group of rank $n_{v_i}$, we consider the space $N_i$ as connected sum of $n_{v_i}$ copies of $S^2\times S^1$ with fixed a basic (reduced) sphere system $\mathcal B_i=\{ B^{v_i}_1,\dots,B^{v_i}_{n_{v_i}}\}$ in each space $N_i$. We remove interiors of $k_{v_i}+2k'_{v_i}$  disjointly embedded $3$-balls from $N_i$ such that these balls are disjoint from the basic sphere system $\mathcal B_i$ of $N_i$. To each edge $e^{v_i}_j$, we label exactly one boundary spheres by $S^{v_i}_j$ and for each loop $e'^{v_i}_k$, we label exactly two boundary sphere $S'^{v_i}_k$. We denote this new space by $N'_i$. If the vertex group is trivial then we consider $N_i$ as the $3$-sphere $S^3$ and then construct space $N'_i$ as described above.

Now, we identify the boundary spheres of all $N'_i$'s which correspond to the same edge in $\Gamma$ using diffeomorphisms with appropriate orientation preserving or reversing property so that the resultant space $M'$ is homeomorphic to $M$ and images in $M$ of the spheres corresponding to edges of $\Gamma$ give the graph of topological spaces structure of $M$ with the underlying graph $\Gamma$.

 Let $B_{n{v_1}+\dots +n_{v_l}+1},\dots,B_n$ be the spheres corresponding to the edges not in the spanning tree.  Note that the set $\mathbb B$ which is a union of all $\mathcal B_i$ together with the spheres in $M'$ which correspond to the edges not in the spanning tree $\mathbb T$, forms a reduced sphere system in $M'$.

Let $h_1$ be a homeomorphism from $M'$ to $M$ which maps a fixed base point $y_o$ in $M'$ to the base point $x_0$. We can assume that $y_0$ is away from all the sphere components of $\mathbb B$. Consider the $h_1$ images of the spheres in $M'$ corresponding to the edges in $\Gamma$. There exist a homeomorphism $h_2:M\to M$ such that $h_2(h_1(B^{v_1}_i))=\Sigma^0_i$, for $1\leq i\leq n_{v_1}$, $h_2(h_1(B^{v_2}_i))=\Sigma^0_i$, for $n_{v_1}< i\leq n_{v_1}+n_{v_2}$, \dots, $h_2(h_1(B^{v_l}_i))=\Sigma^0_i$, for $n_{v_1}+\dots+n_{v_{l-1}}< i\leq n_{v_1}+\dots+n_{v_l}$ and $h_2(h_1(B_i))=\Sigma^0_i$, for $n_{v_1}+\dots+n_{v_l}< i\leq n$. Consider the $h_2$ images of $h_1$ spheres. We call them as $h_2$-spheres. Then, we can see that $h^{-1}$ images of $h_2$-spheres give the decomposition of $M$ such that we can associate the graph of groups structure $\mathcal G$  to $G$ (up to conjugacy of vertex groups). As $\Gamma$ has no terminal vertex with trivial vertex group and no two elementary splittings of $\mathcal G$ are conjugates of each other, $h^{-1}$ images of $h_2$-spheres  is a sphere system in $M$.
\end{proof}

\section{Complex of non-separating spheres}\label{CNS}
In this section, we shall discuss the complex of non-separating embedded spheres in $M$.

\begin{definition}\label{nsc}
 The complex $NS(M)$ of \emph{non-separating} spheres in $M$ is the simplicial complex whose vertices are isotopy classes of non-separating embedded spheres in $M$. A finite collection of isotopy classes of non-separating embedded spheres in $M$ is deemed to span a simplex in $NS(M)$ if they can be realized disjointly (up to isotopy) in $M$.
\end{definition}

 Clearly, we have a natural inclusion of $NS(M)$ into the sphere complex $\mathbb S(M)$, i.e., $ NS(M)$ is a subcomplex of $\mathbb S(M)$. It is a subcomplex of the sphere complex spanned by the vertices which are isotopy classes of non-separating embedded spheres in $M$.

\subsection{Path-connectedness of $NS(M)$}
The $1$-skeleton of the complex $NS(M)$ is path-connected. For, let $A=[S]$ and $B=[S']$ be two vertices in $NS(M)$. Let $A=A_1,\dots,A_n=B$ be a shortest path in the sphere graph of $M$, i.e., the $1$-skeleton of the sphere complex $\mathbb S(M)$. We can choose each $A_i$ to be an isotopy class of a non-separating embedded sphere in $M$ as follows: Suppose that $A_i$ is an isotopy class of a separating embedded sphere $S_i$ in $M$ and let $M_1$ and $M_2$ be its two complementary components of $S_i$ in $M$. Suppose $A_{i-1}=[S_{i-1}]$ and $A_{i+1}=[S_{i+1}]$ such that both $S_{i-1}$ and $S_{i+1}$ are disjoint from $S_i$. If $S_{i-1}$ and $S_{i+1}$ are contained in different complementary components of $S_i$, then they are disjoint and hence $A_{i-1}$ and $A_{i+1}$ connected by an edge in the sphere graph. In this case, the vertex $A_i$  can be removed from the edge-path. If $S_{i-1}$ and $S_{i+1}$ are contained in the same component, say $M_1$, then one can replace $A_i$ by a isotopy class $A'_i$ of a non-separating embedded sphere $S'_i$ contained in $M_2$.

\subsection{Dimension and the diameter of $NS(M)$}
One can see that the dimension of the complex $NS(M)$ is $3n-4$ as there are maximal system $S=\cup_i S_i$ of $2$-spheres in $M$ such that each $S_i$ is non-separating in $M$. The diameter of the $1$-skeleton of the complex $NS(M)$ is infinite. For, if the diameter of the $1$-skeleton of $NS(M)$ is finite, then it will imply that the diameter of sphere graph is finite as every vertex in the sphere graph corresponding to an isotopy class of a separating embedded sphere in $M$ is at a distance one from a vertex in the sphere graph corresponding to an isotopy class of a non-separating embedded sphere in $M$. But this is not true as the diameter of the sphere graph is infinite (see ~\cite{KL}).

\subsection{The complex $NS(M)$ is Flag}

\begin{definition}
 A simplicial complex $\mathbb K$ is flag when, in dimensions two and higher, a simplex is present if its faces are, i.e., every complete subgraph
on $r + 1$ vertices contained in the $1$-skeleton is the $1$-skeleton of an $r$-simplex.
\end{definition}

Note that every automorphism of the $1$-skeleton of a flag simplicial complex extends uniquely to an automorphism of the whole complex. By \cite[Theorem 3.3]{GP}, one can see that the complex $NS(M)$ is a flag complex. Thus, every automorphism of the $1$-skeleton of the complex $NS(M)$ extends uniquely to an automorphism of $NS(M)$.

\subsection{Action of $Out(\mathbb F_n)$ on $NS(M)$}
One can see that a homeomorphism of $M$ maps a non-separating embedded sphere in $M$ to a non-separating embedded sphere in $M$. This defines a simplicial  action of the mapping class group $\mathcal{M}ap(M)$ on the complex $NS(M)$ and hence a simplicial action of the group $Out(\mathbb F_n)$ of outer automorphisms of $F_n$. In fact, $NS(M)$ is a subcomplex of the sphere complex invariant under the action of $Out(\mathbb F_n)$. Hence, $NS(M)$ is invariant under the action of $Aut(\mathbb S(M))$. The action of $Out(\mathbb F_n)$ on $NS(M)$ defines a homomorphism
$$\Upsilon :Out(\mathbb F_n)\to Aut(NS(M)),$$
 where $Aut(NS(M))$ is the group of simplicial automorphisms of $NS(M)$.  We shall show that this is an isomorphism in Section \ref{Main}.

We recall the following:

\begin{definition}
A \emph{simple} sphere system $\Sigma=\cup_i \Sigma_i$ in $M$ is a sphere system  in $M$ such each complementary component of $\Sigma$ in $M$ is simply-connected.
\end{definition}
A maximal sphere system in $M$ is simple. Splitting $M$ along a maximal sphere system produces a finite collection of $3$-punctured $3$-spheres. Here, a $3$-punctured $3$-sphere is the complement of the interiors of three disjointly embedded $3$-balls in a $3$-sphere.

  We make a note of the following known facts:

 \begin{itemize}
 \item  Every non-separating embedded sphere in $M$ is contained in a maximal system $\Sigma$ of $2$-spheres such that each component of $\Sigma$ is a non-separating sphere in $M$.

  \item  If $S$ is a non-separating component of a maximal system $\Sigma$ of spheres in $M$, then there is a reduced sphere system $\Sigma' \subset \Sigma$ with $S\in \Sigma'$. In particular, every non-separating embedded sphere in $M$ can be extended to a reduced sphere system in $M$.
 \end{itemize}

\section{The groups $Aut(NS(M))$ and $Out(\mathbb F_n)$}\label{Main}
In this section, we shall prove the main result of this paper.

\begin{theorem}\label{MainT}
The group $Aut(NS(M))$ of simplicial automorphisms of the complex $NS(M)$ is isomorphic to the group $Out(\mathbb F_n)$.
\end{theorem}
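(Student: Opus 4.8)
The plan is to follow the strategy of Aramayona--Souto~\cite{AS} for the full sphere complex, adapting it to the subcomplex $NS(M)$. The homomorphism $\Upsilon: Out(\mathbb F_n) \to Aut(NS(M))$ has already been constructed, so the substance of the proof is to produce an inverse. The key structural input is that $NS(M)$ contains an embedded copy of the spine $K_n$ of the reduced outer space $cv_n$: its vertices are the isotopy classes of reduced sphere systems in $M$, and simplices correspond to chains under the ``obtained by collapsing'' partial order. Since every reduced sphere system consists of $n$ pairwise-disjoint non-separating spheres, this spine genuinely sits inside $NS(M)$. The goal is to show $K_n$ is invariant under every $\phi \in Aut(NS(M))$, and then invoke the known fact (again from~\cite{AS}, ultimately resting on the description of $Out(\mathbb F_n)$ as the automorphism group of $K_n$) that simplicial automorphisms of the spine are realized by homeomorphisms of $M$, which gives a homomorphism $Aut(NS(M)) \to Out(\mathbb F_n)$ inverse to $\Upsilon$.

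First I would establish Lemma \ref{mainlemma}: a simplicial automorphism $\phi$ of $NS(M)$ carries simplices corresponding to reduced sphere systems to simplices corresponding to reduced sphere systems. The mechanism is a link calculation. A reduced sphere system $\Sigma$ spans an $(n-1)$-simplex in $NS(M)$ whose link inside $NS(M)$ is spanned by only finitely many vertices --- because cutting $M$ along $\Sigma$ yields a single simply-connected piece (a $2n$-punctured $S^3$), and a non-separating sphere disjoint from $\Sigma$ must, up to isotopy, be one of finitely many spheres in that piece, namely those that re-pair boundary spheres so as to remain non-separating. By contrast, if a maximal-dimensional (or suitable) simplex corresponds to a sphere system $\Sigma'$ that is \emph{not} reduced, then some complementary component of $\Sigma'$ has non-trivial (free, infinite) fundamental group, and that component contains infinitely many isotopy classes of non-separating spheres disjoint from $\Sigma'$; hence the link is spanned by infinitely many vertices. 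Since $\phi$ is a simplicial automorphism it preserves the finiteness/infiniteness of the span of the link of a simplex, which separates the two cases. (I would also need to pin down the dimension bookkeeping: reduced sphere systems give $(n-1)$-simplices, and among $(n-1)$-simplices exactly the reduced ones have links with finite vertex-span; the non-reduced $(n-1)$-simplices are precisely those where some complementary component is not simply connected.)

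From Lemma \ref{mainlemma} I would deduce Lemma \ref{simplelemma}: $\phi$ sends simple simplices to simple simplices. The point is that a sphere system $\Sigma$ is simple if and only if it is contained in a reduced sphere system, equivalently the star of the simplex it spans contains an $(n-1)$-simplex of ``reduced type'' --- and this is detectable purely combinatorially once we know which $(n-1)$-simplices are reduced, which is exactly Lemma \ref{mainlemma}. Granting this, Lemma \ref{spinelemma} --- invariance of $K_n$ under $Aut(NS(M))$ --- follows, because the vertices, simplices and face relations of $K_n$ are all described in terms of reduced sphere systems and inclusion, all of which $\phi$ preserves; then the proof concludes exactly as in~\cite{AS}: the induced action on $K_n$ comes from a unique element of $Out(\mathbb F_n)$, giving a section of $\Upsilon$, and since $NS(M)$ is flag (so automorphisms are determined by their action on the $1$-skeleton, hence on the vertices, and these are recovered from the action on reduced sphere systems) this section is a two-sided inverse. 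The main obstacle I anticipate is the link computation in Lemma \ref{mainlemma}: carefully verifying that the link of a reduced $(n-1)$-simplex is spanned by \emph{finitely} many non-separating spheres requires analyzing, inside the $2n$-punctured sphere $M_\Sigma$, exactly which sphere re-pairings of the boundary components yield a non-separating sphere in $M$, and checking that non-reduced $(n-1)$-simplices really do have infinitely many non-separating spheres in their links --- i.e. that a complementary component with free fundamental group of positive rank contains infinitely many isotopy classes of non-separating spheres (which follows since such a component, being a punctured connected sum of $S^2\times S^1$'s, has a sphere complex of infinite diameter by~\cite{KL}).
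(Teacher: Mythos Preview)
Your overall plan matches the paper's proof closely: characterize reduced $(n-1)$-simplices by the finiteness of their links (Lemma~\ref{mainlemma}), bootstrap to simple simplices (Lemma~\ref{simplelemma}), deduce invariance of the spine (Lemma~\ref{spinelemma}), and then use Bridson--Vogtmann together with an injectivity argument as in~\cite{AS}. Two points need correction.

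First, your formulation of Lemma~\ref{simplelemma} has the containment reversed. A simple sphere system $\Sigma$ has at least $n$ components (its dual graph has trivial vertex groups and fundamental group $\mathbb F_n$), so it \emph{contains} a reduced sphere system as a subset, not the other way around; equivalently, the corresponding simplex has a reduced $(n-1)$-simplex as a \emph{face}, not in its star. This is exactly how the paper argues: choose a maximal tree in the dual graph, and the spheres corresponding to the remaining $n$ edges form a reduced subsystem. With the containment stated correctly the deduction from Lemma~\ref{mainlemma} goes through as you intend.

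Second, your justification for the infinite link in the non-reduced case is not quite enough. Citing~\cite{KL} gives infinitely many isotopy classes of spheres inside a non-simply-connected complementary piece $R$, but you need infinitely many of them to be non-separating \emph{in $M$}, not merely essential in $R$. The paper handles this explicitly: it first observes (Lemma~\ref{notsimple}) that a non-reduced $(n-1)$-system of non-separating spheres cannot be simple, so some component $R$ is a $p$-punctured $\sharp_k S^2\times S^1$ with $p\geq 2$, $k\geq 1$; then it constructs by hand infinitely many spheres $T_m$ in $R$ (tubing a boundary-parallel sphere to a fixed sphere $B$ along a tube crossing a non-separating sphere $A$ in $R$ exactly $m$ times) each of which separates two boundary components of $R$ coming from distinct $\Sigma'_i$, and hence is non-separating in $M$. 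You should expect to supply an argument of this kind rather than invoking infinite diameter.
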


We call a simplex in $NS(M)$ \emph{simple} (respectively, \emph{reduced}) if it corresponds to a simple (respectively, reduced) sphere system in $M$.

Let  $\phi$ be an automorphism of $NS(M)$. Firstly, we shall prove the following lemma:

\begin{lemma}\label{mainlemma}
The automorphism $\phi$ maps a reduced simplex in $NS(M)$ to a reduced  simplex in $NS(M)$.
\end{lemma}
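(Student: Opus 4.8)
The plan is to characterize reduced simplices in $NS(M)$ among all $(n-1)$-simplices by an intrinsic, link-theoretic property that is manifestly preserved by any simplicial automorphism $\phi$. A reduced simplex $\sigma$ is in particular an $(n-1)$-simplex (it has exactly $n$ non-separating sphere components), so $\phi(\sigma)$ is again an $(n-1)$-simplex; the task is to show $\phi(\sigma)$ is reduced. The key observation, as flagged in the introduction, is a finiteness dichotomy: if $\sigma$ corresponds to a reduced sphere system $\Sigma = \Sigma_1 \cup \dots \cup \Sigma_n$, then the complement $M_\Sigma$ is a single simply-connected piece ($2n$-punctured $S^3$), and any non-separating sphere $S$ disjoint from $\Sigma$ must, up to isotopy, live in $M_\Sigma$; since $M_\Sigma$ is simply-connected, $S$ must be parallel into the boundary, i.e. isotopic in $M$ to one of the $\Sigma_i$. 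Hence the link of a reduced simplex $\sigma$ in $NS(M)$ is a finite complex — in fact $\sigma$ is a maximal simplex among non-separating systems only up to the relevant combinatorics, so I would argue the link of a reduced $(n-1)$-simplex is spanned by finitely many vertices (precisely those sphere classes obtained by "doubling" along the $\Sigma_i$, of which there are finitely many). By contrast, if $\tau$ is an $(n-1)$-simplex corresponding to a non-reduced sphere system, then $M_\tau$ has a complementary component $X$ with non-trivial (hence infinite) fundamental group, so $X$ contains infinitely many isotopy classes of non-separating spheres disjoint from $\tau$; thus the link of $\tau$ is spanned by infinitely many vertices.

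First I would set up this dichotomy carefully: (i) count the components of a reduced system and confirm $\sigma$ is an $(n-1)$-simplex; (ii) prove that the link of a reduced $(n-1)$-simplex in $NS(M)$ has finitely many vertices, using Laudenbach's theorem (homotopic spheres are isotopic) to reduce to counting spheres in the simply-connected $M_\Sigma$ up to isotopy, together with the standard fact that a sphere in a punctured $S^3$ that is disjoint from all of $\partial$ is boundary-parallel; (iii) prove that a non-reduced $(n-1)$-simplex has a link with infinitely many vertices, by locating a complementary component with free fundamental group of positive rank and exhibiting infinitely many non-isotopic non-separating spheres there (e.g. via Propositions \ref{splittings} / Theorem \ref{graphsphere}, or directly by the standard "tubing" construction giving infinitely many distinct splittings). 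Since $\phi$ is a simplicial isomorphism it carries the link of $\sigma$ isomorphically onto the link of $\phi(\sigma)$, so $\phi(\sigma)$ has a finite link; being an $(n-1)$-simplex with finite link, it must be reduced by the dichotomy.

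The main obstacle I anticipate is step (ii) — establishing that the link of a reduced simplex is genuinely finite. One must be careful that "non-separating sphere disjoint from $\Sigma$" does not secretly allow infinitely many isotopy classes: a priori a sphere in $M$ disjoint from $\Sigma$ need not be isotopic to one lying in the complementary piece $M_\Sigma$ without first isotoping it, and one must rule out that spheres which are non-separating in $M$ but separating in $M_\Sigma$ contribute infinitely many classes. The resolution is that $M_\Sigma$ is a punctured $S^3$, so every embedded sphere in it either bounds a ball (inessential in $M$) or separates two groups of boundary spheres; the non-separating-in-$M$ condition forces such a sphere, after tubing considerations, to be isotopic in $M$ to one of the $\Sigma_i$ or to a sphere obtained by a boundary-parallel copy — a finite list. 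I would lean on the analysis already available in the sphere-complex literature (\cite{AS}, \cite{Ht1}, \cite{GP}) for the precise enumeration, since the analogue for the full sphere complex is exactly the tool used in \cite{AS}. With the finiteness dichotomy in hand, the conclusion that $\phi(\sigma)$ is reduced is immediate, completing the proof of the lemma.
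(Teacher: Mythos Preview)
Your strategy is exactly the paper's: characterize reduced $(n-1)$-simplices among all $(n-1)$-simplices by the finiteness of their links in $NS(M)$, then use that $\phi$ carries links isomorphically to links. But your execution of both directions of the dichotomy has gaps.

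In the finite direction, your claim that a non-separating sphere $S$ lying in $M_\Sigma$ ``must be parallel into the boundary, i.e.\ isotopic in $M$ to one of the $\Sigma_i$'' is false --- if it held, the link of $\sigma$ would be empty, since link vertices are by definition distinct from the $[\Sigma_i]$. What is actually true (and what the paper uses) is that an essential sphere in the $2n$-punctured $S^3$ is determined up to isotopy by the partition $\{X_S,Y_S\}$ of the boundary spheres $\{\Sigma_i^{\pm}\}$ that it induces; such a sphere is non-separating in $M$ precisely when the partition separates some $\Sigma_i^{+}$ from $\Sigma_i^{-}$. There are only finitely many such partitions, hence finitely many link vertices, but most of them are \emph{not} isotopic to any $\Sigma_i$. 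In the infinite direction, you assert without argument that a non-reduced $(n-1)$-system of non-separating spheres has a complementary component with nontrivial $\pi_1$. This is not automatic: ``non-reduced'' only means the complement fails to be simultaneously connected and simply connected, so a priori it could be disconnected with every piece simply connected. The paper handles this separately (Lemma~\ref{notsimple}): if that happened, the dual graph $\Gamma$ would have $n$ edges, more than one vertex, and all vertex and edge groups trivial, so $\mathbb F_n=\pi_1(M)=\pi_1(\Gamma)$ would have rank strictly less than $n$, a contradiction. Only after this step do you get a complementary component of the form $p$-punctured $\sharp_k S^2\times S^1$ with $k\geq 1$, $p\geq 2$, in which you can run the explicit tubing construction producing infinitely many pairwise non-isotopic spheres $T_m$.
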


To prove the above lemma, we recall the following definition:

\begin{definition}
The \emph{link} of a simplex $\sigma$ in a simplicial complex $\mathbb K$ is a subcomplex of $\mathbb K$ consisting of the simplices $\tau$ that are disjoint from $\sigma$ and such that both $\sigma$ and $\tau$ are faces of some higher-dimensional simplex in $\mathbb K$.
\end{definition}

So if $\sigma$ is a simplex in $NS(M)$, $\sigma$ corresponds to a system $S=\cup_i S_i$ of non-separating embedded spheres in $M$. The $link(\sigma)$ is the subcomplex of $NS(M)$ spanned by the isotopy classes $[T]$ of non-separating embedded spheres $T$ in $M$ such that all $T$'s are disjoint from all and distinct from all $S_i$ up to isotopy. We shall look at the links of  reduced simplices and non-reduced simplices in $NS(M)$.

\subsection{Link of a reduced simplex}
Consider a reduced simplex $\sigma$ in $NS(M)$ corresponding to a reduced sphere system $\Sigma=\cup_i\Sigma_i$ in $M$. Cutting $M$ along $\Sigma$, then gives a simply-connected $3$-manifold $M'$ with $2n$ boundary components $\Sigma^+_i,\Sigma^-_i$, for $1\leq i\leq n$, where $\Sigma^+_i$ and $\Sigma^-_i$ correspond to the sphere $\Sigma_i$ for each $i$. Note that  $M'$ is homeomorphic to a $3$-sphere with interiors of $2n$ disjointly embedded $3$-balls removed. Now, every non-boundary parallel embedded sphere in $M'$ gives a partition of the boundary spheres of $M'$ into two sets, say $X_S$ and $Y_S$. Moreover, its isotopy class in $M'$ is determined by such a partition of the boundary spheres of $M'$.

 Consider an embedded sphere $S$ in $M'$ such that the partition $\{X_S,Y_S\}$ of the boundary spheres of $M'$ has the following property: There is a $\Sigma^{\epsilon}_i \in X_S$  for some sign $\epsilon\in\{+,-\}$ and for some $i$ such that $\Sigma^{-\epsilon}_i\in Y_S$. Such a sphere in $M'$ corresponds to a non-separating embedded sphere in $M$ which is disjoint from all and distinct from all the spheres $\Sigma_i$. Note that any embedded sphere in $M$ disjoint and distinct from all $\Sigma_i$ which corresponds to a partition of the boundary spheres of $M'$ with the above property is non-separating in $M$. The isotopy classes of such non-separating embedded spheres in $M$ span the link of the simplex $\sigma$ in $NS(M)$. One can easily see that the $link(\sigma)$ has a finite number of vertices.

\subsection{Link of an $(n-1)$-simplex which is not reduced}
 Consider an $(n-1)$-simplex $\sigma'$ in $NS(M)$ such that it corresponds to a  system $\Sigma'=\cup_i\Sigma'_i$ of non-separating spheres in $M$ which is not reduced.

 \begin{lemma}\label{notsimple}
 The sphere system $\Sigma'$ is not a simple sphere system.
 \end{lemma}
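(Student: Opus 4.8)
The plan is to derive a contradiction from the assumption that $\Sigma'$ is simple, by reading off the rank of $G=\pi_1(M)$ from the graph of groups decomposition of $G$ that $\Sigma'$ determines, and then invoking invariance of the rank of a free group.

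First I would record the combinatorics. Since $\sigma'$ is an $(n-1)$-simplex it has $n$ vertices, so $\Sigma'=\Sigma'_1\cup\dots\cup\Sigma'_n$ is a system of exactly $n$ disjointly embedded, pairwise non-parallel non-separating spheres in $M$. Suppose, towards a contradiction, that $\Sigma'$ is a simple sphere system, i.e. every component of $M-\Sigma'$ is simply-connected. I claim this forces $M-\Sigma'$ to be disconnected. Indeed, ``$\Sigma'$ not reduced'' means that $M-\Sigma'$ fails to be simultaneously connected and simply-connected; but if $M-\Sigma'$ were connected, then simplicity would make its unique complementary component simply-connected, contradicting ``not reduced''. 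Hence cutting $M$ along $\Sigma'$ produces components $P_1,\dots,P_k$ with $k\geq 2$, each of which is simply-connected.

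Next I would apply the construction of Section~\ref{GOG}: cutting $M$ along the sphere system $\Sigma'$ yields a graph of groups $\mathcal G$ with $\mathcal G_\Gamma=G=\pi_1(M)$, whose underlying graph $\Gamma$ has vertex set $\{P_1,\dots,P_k\}$ and one geometric edge for each sphere $\Sigma'_i$, with trivial edge groups and with vertex groups $G_{P_j}=\pi_1(P_j)=\{1\}$. Since all vertex (and edge) groups are trivial, $\mathcal G_\Gamma$ is just the fundamental group of the graph $|\Gamma|$, which is connected because $M$ is, and so is free of rank $(\#\text{edges})-(\#\text{vertices})+1=n-k+1$. As $G=\pi_1(M)$ is free of rank $n$ and the rank of a free group is a well-defined invariant, we get $n-k+1=n$, hence $k=1$, contradicting $k\geq 2$. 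Therefore $\Sigma'$ is not a simple sphere system.

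The only delicate point is the dichotomy hidden in the phrase ``not reduced'': one must be careful to rule out a complement that is connected but not simply-connected, and this is exactly what simplicity excludes, since simplicity constrains every complementary component (and there is only one when the complement is connected). Everything else is the standard fact, established in Section~\ref{GOG}, that a sphere system in $M$ induces a graph of groups decomposition of $\pi_1(M)$ with trivial edge groups, combined with invariance of the rank of a free group; so I do not expect any substantial obstacle beyond bookkeeping.
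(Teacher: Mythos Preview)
Your proof is correct and follows essentially the same argument as the paper: assume $\Sigma'$ simple, deduce that $M-\Sigma'$ has $k\geq 2$ simply-connected components (since otherwise $\Sigma'$ would be reduced), then use the associated graph-of-groups decomposition with trivial vertex and edge groups to identify $\pi_1(M)$ with $\pi_1(\Gamma)$, a free group of rank $n-k+1<n$, contradicting $\mathrm{rank}(\pi_1(M))=n$. You spell out the rank computation and the ``not reduced $\Rightarrow$ disconnected under simplicity'' step a bit more explicitly than the paper does, but the logic is identical.
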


 \begin{proof}
   If $\Sigma'$ is simple, then it must have more than one complementary component as $\Sigma'$ is not a reduced sphere system. Consider the  graph of groups $\mathcal G$ with underlying graph $\Gamma$ associated to $\Sigma'$. The edges of $\Gamma$ correspond to the components of $\Sigma'$ and vertices correspond to the complementary components of $\Sigma'$ in $M$. Then, $\Gamma$ has more than one vertex with all the vertex groups and edge groups trivial. In this case, $F_n=\pi_1(M)=\pi_1(\mathcal G)$ is the fundamental group of $\Gamma$. But $\Gamma$ has more than one vertex and $n$ edges, so $\pi_1(\Gamma)$ can not be a free group on $n$ generators. Therefore, $\Sigma'$ can not be simple.
    \end{proof}

    \begin{figure}
\includegraphics[scale=0.4]{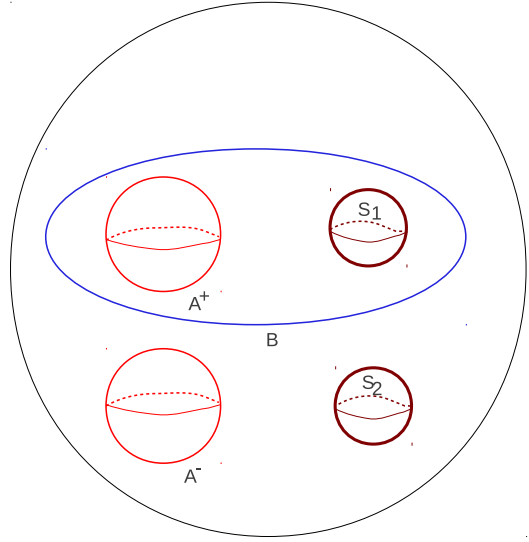}
\caption{Sphere $B$ in $P$=$2$-punctured $S^2\times S^1$}\label{fig64}
\end{figure}

\begin{figure}
\includegraphics[scale=0.4]{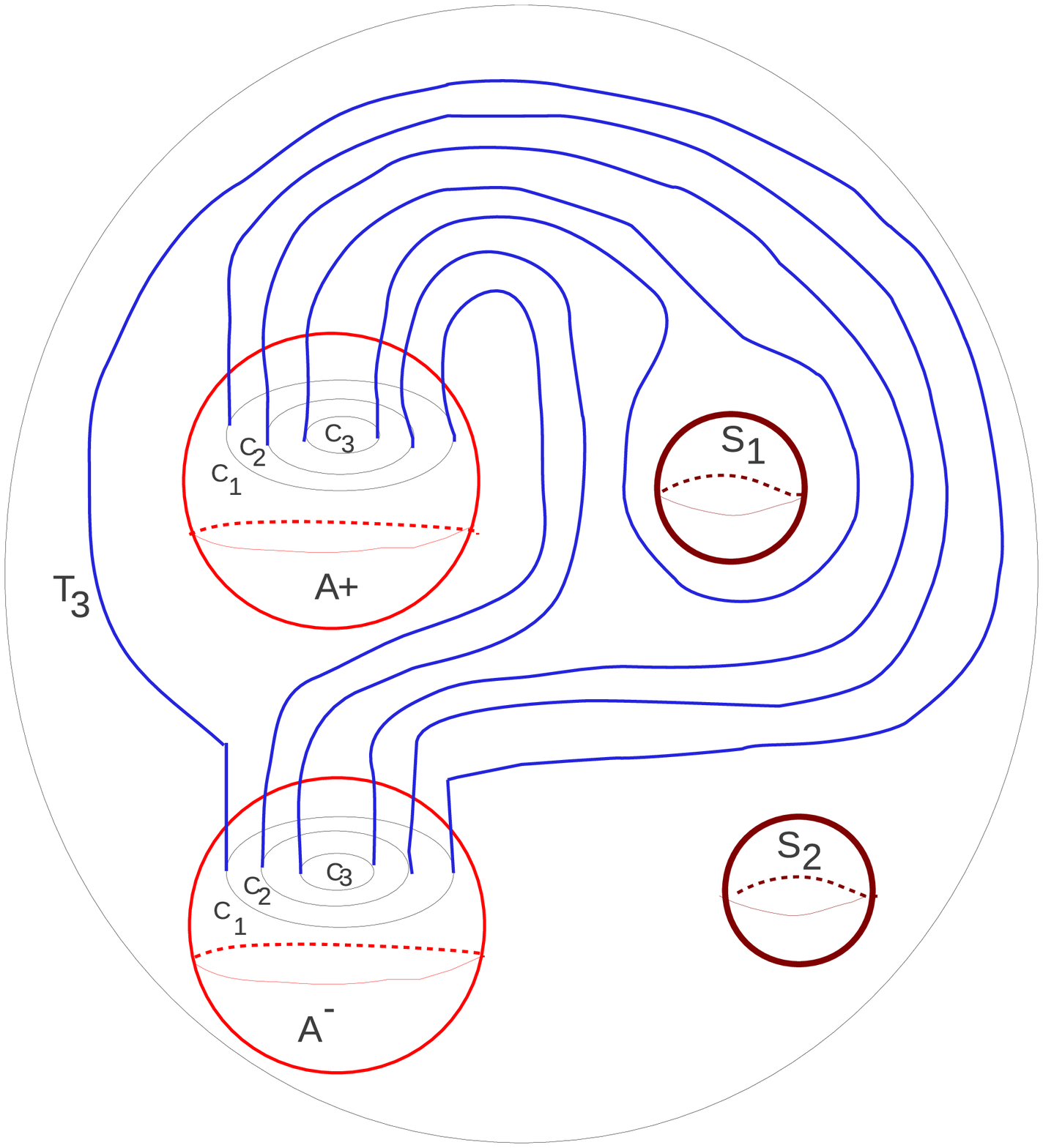}
\caption{Sphere $T_3$ in $P$=$2$-punctured $S^2\times S^1$}\label{fig67}
\end{figure}

      Cutting $M$ along $\Sigma'$, then gives a (not necessarily connected) $3$-manifold $M'$ with $2n$ boundary components $\Sigma'^+_i, \Sigma'^-_i$ for $1\leq i\leq n$ such that $\Sigma'^+_i$ and $\Sigma'^-_i$ correspond to the sphere $\Sigma'_i$ for each $i$. Now, as $\Sigma'$ is not reduced and each $\Sigma'_i$ is non-separating, by Lemma \ref{notsimple} there is at least one non simply-connected component of $M'$, say $R$, with at least two boundary spheres.  Note that there are at least two boundary spheres of $R$ which correspond to two distinct components of $\Sigma'$ and if $M'$ has more that one component, then there are at least two boundary spheres, say $S_1=\Sigma'^{\epsilon}_i$  and $S_2=\Sigma'^{\eta}_j$, for some $i$ and $j$ and for some signs $\epsilon,\eta \in\{+,-\}$ such that $\Sigma'^{-\epsilon}_i$ and $\Sigma'^{-\eta}_j$ are not  boundary spheres of $R$, i.e., they are boundary components of some other component of $M'$. Note that $R$ is a $p$-punctured $\sharp_{k} S^{2}\times S^{1}$, for appropriate integers $p\geq 2$ and $k\geq 1$. Now, we shall see that for $k=1$ and $p=2$, $link(\sigma')$ has infinitely many vertices by constructing infinitely many non- boundary parallel non-separating embedded spheres in $R$. The same construction will work for all the other  possibilities of $k$ and $p$ obviously.

   Now, suppose $k=1$ and $p=2$. In this case, $R$ is a 2-punctured $S^2\times S^1$. Let $S_1$ and $S_2$ be the two boundary components of $R$. These boundary spheres correspond to two distinct components of $\Sigma'$. In this case, $R$ has infinitely many embedded spheres which separate the boundary spheres $S_1$ and $S_2$. These spheres in $R$  are non-separating embedded spheres in $M$. We construct such examples as follows: Let $A$ be a non-boundary parallel, non-separating embedded sphere in $R$ disjoint from $S_1$ and $S_2$. If we cut $R$ along $A$, then we get a manifold $R'$ which is a $4$-punctured $S^3$ with boundary components $S_1, S_2, A^+$ and $A^-$, where $A^+$ and $A^-$ correspond to the embedded sphere $A$ in $R$. Consider an embedded sphere $B$ in $R'$ which separates $A^+$ and $S_1$ from $A^-$ and $S_2$ as shown in Fig. \ref{fig64}.

 The (image under identification map from $R'$ to $R$ of) embedded sphere $B$ is non-separating in $R$. Let $S'_1$ be an embedded sphere parallel to $S_1$ in $R$ which is disjoint from all the spheres $S_1,S_2$ and $A$. Now, we consider an embedded sphere $T_m$ which is connected sum of $B$ and $S'_1$ in $R$ along with a tube $t_m$ such that \begin{enumerate}
      \item the tube $t_m$ intersects transversely $m$ times the embedded sphere $A$ as follows: Starting from $B$ it hits $A^-$, piercing $A$, again hitting $A^-$ through $A^+$, it hits $(m-1)$ times the sphere $A$ in this fashion, it ends at $S'_1$ as shown in Fig. \ref{fig67}, for $m=3$.
       \item  The circles of intersection of $t_m$ with $A$ split $t_m$ into components called pieces such that each piece separates the boundary spheres $S_1$ and $S_2$.
        \end{enumerate}
        One can see that each sphere $T_m$ is non-separating in $M$ as it separates the boundary spheres of $R$. Each $T_m$ is disjoint from all and distinct from all the spheres $\Sigma'_i$. Thus, $link(\sigma')$ is infinite.

    \subsection{Automorphisms of $NS(M)$ and simple simplices}

    Firstly we shall see the proof of Lemma \ref{mainlemma}
    \begin{proof}[Proof of Lemma \ref{mainlemma}]
     If an automorphism of $NS(M)$ maps a simplex $\tau$ of $NS(M)$ to another simplex $\tau'$ of $NS(M)$, then it maps $link(\tau)$ to $link(\tau')$ isomorphically. As $link(\sigma)$ of a reduced simplex $\sigma$ has finitely many vertices and $link(\sigma')$ of a non-reduced $(n-1)$-simplex has infinitely many vertices, the automorphism $\phi$ can not map $link(\sigma)$ to $link(\sigma')$ and hence it can not map the reduced simplex $\sigma$ to the non-reduced simplex $\sigma'$. This shows that $\phi$ maps a reduced simplex in $NS(M)$ to a reduced simplex in $NS(M)$.
     \end{proof}

    \begin{lemma}\label{simplelemma}
    Every automorphism $\phi$ of $NS(M)$ maps a simple simplex in $NS(M)$to simple simplex in $NS(M)$.
    \end{lemma}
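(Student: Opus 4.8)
The plan is to characterize simple simplices of $NS(M)$ in a way that is manifestly invariant under simplicial automorphisms, and then to feed this characterization into Lemma~\ref{mainlemma}. The characterization I will establish is: \emph{a simplex $\sigma$ of $NS(M)$ is simple if and only if some face of $\sigma$ is a reduced simplex}. Granting this, Lemma~\ref{simplelemma} is immediate: if $\sigma$ is a simple simplex, choose a reduced face $\tau$ of $\sigma$; since $\phi$ is simplicial, $\phi(\tau)$ is a face of $\phi(\sigma)$, and $\phi(\tau)$ is again a reduced simplex by Lemma~\ref{mainlemma}, so $\phi(\sigma)$ has a reduced face and is therefore simple.

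For the ``if'' direction, suppose $\sigma$ corresponds to a sphere system $\Sigma=\Sigma'\cup\Sigma''$ with $\Sigma'$ reduced. Splitting $M$ along $\Sigma'$ produces the $2n$-punctured $S^3$ recalled in Section~\ref{model}, which is simply-connected, and the spheres of $\Sigma''$ descend to a collection of disjoint embedded spheres in it. Splitting a simply-connected compact $3$-manifold with spherical boundary along disjoint embedded spheres produces only simply-connected pieces (each such sphere separates, since otherwise a loop crossing it once would be non-trivial in $\pi_1$; one then applies van Kampen's theorem and inducts on the number of spheres). Hence every complementary component of $\Sigma$ in $M$ is simply-connected, that is, $\sigma$ is simple.

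For the ``only if'' direction, let $\Sigma$ be a simple system with simply-connected complementary components $R_1,\dots,R_c$, and let $k$ be the number of sphere components of $\Sigma$. Let $\mathcal G$ be the graph of groups associated to $\Sigma$, with underlying graph $\Gamma$, as in Section~\ref{GOG}: the $R_i$ are the vertices of $\Gamma$, the components of $\Sigma$ are its $k$ edges, and all vertex and edge groups are trivial, so $\mathbb F_n\cong\pi_1(M)=\pi_1(\mathcal G)=\pi_1(\Gamma)$ is free of rank $k-c+1$, whence $k=n+c-1$. Fix a spanning tree $T\subseteq\Gamma$ and let $\Sigma'\subseteq\Sigma$ be obtained by deleting the $c-1$ spheres that correspond to the edges of $T$, so $\Sigma'$ has exactly $n$ components. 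The complement of $\Sigma'$ in $M$ is obtained from $R_1\sqcup\dots\sqcup R_c$ by regluing along these $c-1$ spheres; processing the tree edges one at a time, each step identifies two \emph{distinct} simply-connected pieces along a sphere, so by van Kampen's theorem the complement stays simply-connected, and after all $c-1$ steps it is connected and simply-connected. Hence $\Sigma'$ is a reduced system; its sphere components, being a subset of those of $\Sigma$, are non-separating and pairwise non-isotopic, so $\Sigma'$ determines a reduced face of $\sigma$.

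The delicate point is this last step: one must check that discarding a spanning tree's worth of spheres from a simple system leaves a system whose complement is not merely connected but \emph{simply}-connected. This is precisely where the tree structure is used --- it ensures that no regluing attaches a piece to itself, which would otherwise create an HNN extension and hence introduce an extra $\mathbb Z$ factor into the fundamental group of the merged piece, destroying simple-connectivity.
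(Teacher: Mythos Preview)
Your proof is correct and follows essentially the same route as the paper: both establish the characterization ``$\sigma$ is simple $\iff$ $\sigma$ has a reduced face'' via the dual graph-of-groups and a spanning-tree argument, and then invoke Lemma~\ref{mainlemma}. Your write-up is somewhat more explicit (the Euler-characteristic count $k=n+c-1$ and the van~Kampen justification for both directions), whereas the paper states the ``if'' direction without proof and handles the ``only if'' direction by the same spanning-tree extraction.
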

    \begin{proof}
    Note that a sphere system in $M$ which contains a reduced sphere system as its subset is simple. Moreover, every simple sphere system $\Sigma=\cup_i \Sigma_i$ in $M$ contains a reduced sphere system in $M$ as its subset. For, consider the  graph of groups $\mathcal G$ with underlying graph $\Gamma$ associated to $\Sigma$. The edges of $\Gamma$ correspond to the embedded spheres $\Sigma_i$  and the vertices correspond to the complementary components of $\Sigma$ in $M$. As $\Sigma$ is simple, $\Gamma$ has with all the vertex groups and edge groups trivial. In this case, $F_n=\pi_1(M)=\pi_1(\mathcal G)$ is the fundamental group of $\Gamma$. Consider a maximal tree $\mathbb T$ in $\Gamma$. Then, $\Gamma\setminus \mathbb T$ has $n$ edges, say $e_1,\dots,e_n$. Let $\Sigma_{i_1},\dots,\Sigma_{i_n}$ be the components of $\Sigma$ corresponding to the edges $e_1,\dots,e_n$. As the the tree $\mathbb T$ is connected and contains all the vertices of $\Gamma$, one can see that the complement of the embedded spheres $\Sigma_{i_k}$ in $M$ is connected. The fundamental group of the complement of these spheres $\Sigma_{i_k}$ in $M$ is the fundamental group of the graph of groups $\mathcal G'$ with underlying graph $\mathbb T$.  As $\mathbb T$ is a tree with all vertex groups and edge groups trivial, the fundamental group of $\mathcal G'$ is trivial. Hence, the complement of the spheres $\Sigma_{i_k}$ in $M$ is simply-connected. This shows that the $n$ embedded spheres $\Sigma_{i_k}$'s corresponding to these $n$ edges outside the maximal tree $\mathbb T$ form a reduced sphere system in $M$.

      If $\sigma$ is a simple simplex in $NS(M)$, it has a face $\tau$ which is reduced.  Then, $\phi(\sigma)$ is a simplex in $NS(M)$ with face $\phi(\tau)$. By Lemma \ref{mainlemma}, $\phi(\tau)$ is a reduced simplex  which implies that $\phi(\sigma)$ is a simple simplex in $NS(M)$. Thus, $\phi$ maps a simple simplex in $NS(M)$ to a simple simplex in $NS(M)$.
    \end{proof}

\subsection{Culler-Vogtmann Space and the reduced outer space}
   We briefly recall the definition of \emph{Culler-Vogtmann space} $CV_n$ for free group $\mathbb F_n$ of rank $n$ (\cite{CV},~\cite{VK}). It is also known as \emph{outer space}. A point in $CV_n$ is an equivalence class of marked metric graphs $(h,X)$ such that
   \begin{enumerate}
    \item $X$ is a metric graph with $\pi_1(X) = \mathbb F_n$ having edge lengths which sum to $1$, with all vertices of valence at least $3$.
    \item The marking is given by a homotopy equivalence $h:R_n\to X$, where $R_n$ is a graph with one vertex $v$ and $n$ edges. The free group of rank $n$ is identified with $\pi_1(R_n,v)$ where the identification takes generators of the free group to the edges of $R_n$.
     \item Two such marked graphs $(h,X)$ and $(h',X')$ are equivalent if they are isometric via an isometry $g:X\to X'$ such that $g\circ h$ is homotopic to $h'$.
         \end{enumerate}

    In \cite{Ht}, the outer space $CV_n$ is denoted by $\mathbb O_n$. We shall follow the same notation as in \cite{Ht}. An edge $e$ of a graph $X$ is called a separating edge if $X- e$ is disconnected.  There is a natural equivariant deformation retraction of $\mathbb O_n$ onto the subspace $cv_n$ consisting of marked metric graphs $(h,X)$ such that $X$ has no separating edges.  The deformation proceeds by uniformly collapsing all separating edges in all marked graphs in $\mathbb O_n$. The subspace $cv_n$ of $\mathbb O_n$ is also known as \emph{reduced outer space}.

    The group $Out(\mathbb F_n)$ acts on $\mathbb O_n$ on the right by changing the markings: given $\phi\in Out(\mathbb F_n)$, choose a representative $f:R_n \to R_n$ for $\phi$, then $(g,X)\phi = (g\circ f, X)$. This action preserves the reduced outer space $cv_n$ as a subset of $\mathbb O_n$. In particular, we have an $Out(\mathbb F_n)$-action on the reduced outer space $cv_n$.

    In \cite{Ht}, it was shown that the space $\mathbb O_n$ is $Out(\mathbb F_n)$-equivariantly homeomorphic to a subset of sphere complex $\mathbb S(M)$ as described as follows: We interpret points in the sphere complex $\mathbb S(M)$ as (positively) weighted sphere systems in $M$. Let $\mathbb S_\infty$ be the subcomplex of $\mathbb S(M)$ consisting of those elements $\sum_i a_iS_i \in \mathbb S(M)$ such that $M\setminus \cup_i S_i$ has at least one non-simply connected component. To a point $\sum_i a_iS_i \in \mathbb S(M)\setminus \mathbb S_\infty$, we associate the dual graph to $\cup_i S_i$ and assign the edge corresponding to $S_i$ to have length $a_i$. This gives a map $\theta: \mathbb O_n \to \mathbb S(M)\setminus \mathbb S_\infty$. In \cite{Ht}, it is shown that $\theta$ is a homeomorphism.

   Let $N\mathbb S_\infty = NS(M)\cap \mathbb S_\infty$. Then, one can see that the reduced outer space $cv_n$ is $Out(\mathbb F_n)$-equivariantly homeomorphic to $NS(M)\setminus N\mathbb S_\infty $, where the homeomorphism is given by the restriction of $\theta$ to $cv_n\subset \mathbb O_n$.

   Culler and Vogtmann defined what is called the \emph{spine} $K_n$ of the reduced outer space $cv_n$. Considering $cv_n$ as a subset of $NS(M)$, the spine $K_n$ is the maximal full subcomplex of the first barycentric subdivision of $NS(M)$ which is contained in $cv_n$ and is disjoint from $N\mathbb S_\infty$. The interior of every simplex contained in $cv_n$ intersects $K_n$. We have an action of $Out(\mathbb F_n)$ on $NS(M)$ which preserves $cv_n$, i.e., we have an action $Out(\mathbb F_n)$ on $cv_n$. This action of $Out(F_n)$ on $cv_n$ preserves the spine which yields an action of $Out(\mathbb F_n)$ on the spine $K_n$. This gives a homomorphism
   $$\Omega: Out(\mathbb F_n)\to Aut(K_n),$$
    where $Aut(K_n)$ is the group of simplicial automorphisms of $K_n$.

   We have the following theorem of Bridson and Vogtmann (see \cite{BV}),

   \begin{theorem}
   The homomorphism $\Omega$ is an isomorphism for $n\geq 3$.
   \end{theorem}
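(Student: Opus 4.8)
The plan is to prove injectivity of $\Omega$ by exhibiting a point with trivial stabiliser, and surjectivity by a rigidity argument in the spirit of Ivanov: first recover the poset structure underlying $K_n$ from its bare simplicial structure, and then recover $Out(\mathbb F_n)$ from the incidence pattern of the ``rose'' vertices.

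For injectivity, suppose $\phi\in Out(\mathbb F_n)$ acts trivially on $K_n$. In the sphere model the vertices of $K_n$ are the barycentres of the simplices of $NS(M)$ corresponding to simple non-separating sphere systems, so $\phi$ fixes the isotopy class of every reduced sphere system, hence every marked rose. The stabiliser in $Out(\mathbb F_n)$ of the standard marked rose is the finite group $W_n\cong(\mathbb Z/2\mathbb Z)^n\rtimes S_n$ of signed permutations of a basis; requiring in addition that $\phi$ fix a Nielsen-neighbouring rose cuts this group down, and for $n\ge 3$ a short finite check forces $\phi=1$. (Alternatively one may quote the standard fact that the $Out(\mathbb F_n)$-action on $cv_n$, hence on $K_n$, is faithful.) Thus $\Omega$ is injective.

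For surjectivity, fix $\Psi\in Aut(K_n)$ and regard $K_n$ as the order complex of the poset $P$ whose elements are the simple non-separating sphere systems in $M$, ordered by inclusion; $P$ is graded by the number of spheres, its minimal elements are the reduced systems ($n$ spheres, dual to marked roses) and its maximal elements are the maximal sphere systems ($3n-3$ spheres, dual to trivalent marked graphs). The first step is to show $\Psi$ is induced by an automorphism of $P$: the key combinatorial lemma is that the rank of a vertex is detectable from the local simplicial structure of $K_n$ (through the structure of its links and of the maximal simplices containing it), so $\Psi$ preserves rank, and in particular permutes the set $\mathcal R$ of rose vertices. Form the auxiliary graph $\mathcal W_n$ on $\mathcal R$ in which two roses are adjacent when they admit a common rank-$(n+1)$ refinement in $P$ — equivalently, when their dual bases of $\mathbb F_n$ differ by an elementary Whitehead transformation. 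Since $\Psi$ preserves rank and incidence in $K_n$, it induces an automorphism of $\mathcal W_n$. The crux is then to show that every automorphism of $\mathcal W_n$ is realised by an element of $Out(\mathbb F_n)$: from the incidence pattern in $\mathcal W_n$ one reconstructs which bases share common free factors and which share primitive elements, and one invokes that $Out(\mathbb F_n)$ is generated by Whitehead (equivalently Nielsen) transformations to promote the combinatorial bijection of $\mathcal R$ to an outer automorphism $\phi$. Replacing $\Psi$ by $\Omega(\phi)^{-1}\circ\Psi$, we may assume $\Psi$ fixes $\mathcal R$ pointwise.

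It remains to check that a rank-preserving $\Psi$ fixing every rose is the identity. Since every component of a sphere system representing a simplex of $NS(M)$ is non-separating, the dual graph of such a system has no bridge; consequently a non-reduced simple non-separating system is the union of its one-sphere-smaller simple subsystems (at least two such exist, obtained by deleting a non-loop edge of the dual graph). A short upward induction on rank — at each step using that $\Psi$ maps the proper subsystems of a vertex bijectively onto those of its image — then shows $\Psi$ fixes every vertex of $K_n$, so $\Psi=\Omega(\phi)\in\operatorname{im}\Omega$, and $\Omega$ is surjective. The step I expect to be the genuine obstacle is the realisation of automorphisms of $\mathcal W_n$ by outer automorphisms: this is where the algebra of the free group must be recovered from purely combinatorial data, and it is also where the hypothesis $n\ge 3$ is essential, since for $n\le 2$ the graph $\mathcal W_n$ has extra symmetries not induced by $Out(\mathbb F_n)$. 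The rank-detection lemma is also not automatic and carries real combinatorial content, but it is technical rather than conceptual.
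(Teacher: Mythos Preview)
The paper does not prove this theorem: it is quoted as a result of Bridson and Vogtmann \cite{BV} and used as a black box in the proof of Theorem~\ref{MainT}. So there is no ``paper's own proof'' to compare your proposal against.

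That said, your outline is broadly in the spirit of the Bridson--Vogtmann argument, but as written it is a strategy rather than a proof. You correctly identify the two substantive steps --- rank detection (showing that an arbitrary simplicial automorphism of $K_n$ respects the underlying poset grading) and realisation (showing that automorphisms of the rose/Whitehead graph are induced by $Out(\mathbb F_n)$) --- and you honestly flag both as nontrivial. But you do not actually carry out either one: the rank-detection step requires a careful analysis of links in $K_n$ (in the original paper this is done by characterising certain star-types combinatorially), and the realisation step needs a genuine reconstruction argument, not just the observation that $Out(\mathbb F_n)$ is generated by Whitehead moves. Your injectivity argument is fine in outline (the faithfulness of the action on $cv_n$ is standard), but the ``short finite check'' for $n\ge 3$ is again asserted rather than performed.

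In short: for the purposes of this paper the statement is simply cited, so no proof is expected of you here. If you want to supply one, you would need to fill in precisely the two steps you yourself single out as the obstacles; as it stands, the proposal names the right ingredients but does not establish them.
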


Now, we have the following lemma:
\begin{lemma}\label{spinelemma}
Every automorphism $\phi\in Aut(NS(M))$ preserves the reduced outer space $cv_n$ and hence the spine $K_n$.
\end{lemma}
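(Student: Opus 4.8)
The plan is to reduce the statement entirely to the combinatorics of $NS(M)$ and to Lemma~\ref{simplelemma}. First I would record the precise description of $cv_n$ inside $NS(M)$: under the homeomorphism $\theta$ of the previous subsection, $cv_n$ is identified with $NS(M)\setminus N\mathbb S_\infty$. Since $\mathbb S_\infty$ is a subcomplex of $\mathbb S(M)$, and a simplex of $\mathbb S(M)$ lies in $\mathbb S_\infty$ exactly when the associated sphere system has a non-simply-connected complementary component, while deleting spheres from a non-simple sphere system leaves it non-simple (gluing a sphere back inside a component with non-trivial $\pi_1$ only enlarges the fundamental group, by van Kampen), it follows that $N\mathbb S_\infty$ is the union of the (closed) non-simple simplices of $NS(M)$ and that
$$cv_n=\bigcup_{\sigma\ \text{simple}}\mathring{\sigma}\ \subset\ |NS(M)|,$$
the union being over the open simplices $\mathring\sigma$ of the simple simplices $\sigma$ of $NS(M)$.

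Next, since $\phi$ is a simplicial automorphism of $NS(M)$, its geometric realization is a homeomorphism of $|NS(M)|$ carrying each open simplex $\mathring\sigma$ onto the open simplex $\mathring{\phi(\sigma)}$. By Lemma~\ref{simplelemma} applied to $\phi$ and again to $\phi^{-1}$, the map $\phi$ induces a bijection of the set of simple simplices of $NS(M)$. Combining this with the displayed description of $cv_n$ gives $\phi(cv_n)=cv_n$, and hence also $\phi(N\mathbb S_\infty)=N\mathbb S_\infty$; in particular $\phi$ restricts to a self-homeomorphism of $cv_n$. (The same argument applied to the simplicial $Out(\mathbb F_n)$-action re-derives the invariance of $cv_n$ already noted above.)

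For the spine, recall that $K_n$ is the full subcomplex of the first barycentric subdivision $NS(M)'$ whose vertex set consists of the barycenters $\hat\sigma$ of the simple simplices $\sigma$ of $NS(M)$, with $\hat\sigma_0,\dots,\hat\sigma_r$ spanning a simplex iff $\sigma_0\subsetneq\dots\subsetneq\sigma_r$. The automorphism $\phi$ induces a simplicial automorphism $\phi'$ of $NS(M)'$ via $\phi'(\hat\sigma)=\widehat{\phi(\sigma)}$, which preserves the strict-inclusion relation among simplices; since $\phi$ permutes the simple simplices, $\phi'$ permutes the vertex set of $K_n$ and preserves the spanning condition, so $\phi'(K_n)=K_n$, i.e.\ $\phi$ maps $|K_n|$ onto itself. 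This proves the lemma, and combined with the Bridson--Vogtmann theorem it will give $\Upsilon$ as an isomorphism exactly as in~\cite{AS}. The only place with genuine content is Lemma~\ref{simplelemma} (and behind it the link computations of this section, via Lemma~\ref{mainlemma}); the main thing to be careful about here is keeping straight the identification $cv_n\leftrightarrow NS(M)\setminus N\mathbb S_\infty$ as a union of open simplices and the description of $K_n$ inside the barycentric subdivision, so that "simplicial automorphism permutes open simplices and their barycenters'' can be applied cleanly.
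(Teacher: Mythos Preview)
Your proposal is correct and follows essentially the same approach as the paper: both reduce the lemma entirely to Lemma~\ref{simplelemma}. The paper's proof is a one-liner (``by Lemma~\ref{simplelemma}, $\phi$ preserves simple simplices, hence preserves $cv_n$ and $K_n$''), whereas you spell out in detail why this implication holds---identifying $cv_n$ with the union of open simple simplices, checking that $N\mathbb S_\infty$ is closed under passing to faces via van Kampen, and describing $K_n$ as the full subcomplex of $NS(M)'$ on barycenters of simple simplices---but the underlying argument is the same.
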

\begin{proof}
   We have seen that every automorphism $\phi$ of $NS(M)$ maps simple simplices of $NS(M)$ to simple simplices $NS(M)$, Lemma \ref{simplelemma}. From this it follows that every automorphism $\phi\in Aut(NS(M))$ preserves the reduced outer space $cv_n$ and hence preserves the spine $K_n$ of $cv_n$.
   \end{proof}

    Thus, we  have a homomorphism
   $$\Lambda : Aut(NS(M)) \to Aut(K_n)\cong Out(\mathbb F_n).$$

  \subsection{Proof of Theorem \ref{MainT}}
   Now, Theorem \ref{MainT} follows if we show that the homomorphism $\Lambda$ is injective. In \cite{AS}, it is shown for automorphisms of the sphere complex. The same arguments hold for automorphisms of $NS(M)$. But for the sake of completeness, we write the proof here again.

   \begin{lemma}
   The identity is the only automorphism of $NS(M)$ acting trivially on the spine $K_n$.
   \end{lemma}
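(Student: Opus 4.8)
The plan is to upgrade ``$\phi$ acts trivially on $K_n$'' into ``$\phi$ fixes every vertex of $NS(M)$'', and then invoke the fact that a simplicial automorphism is determined by its action on vertices, so that $\phi=\mathrm{id}$. First I would unwind what the hypothesis says combinatorially. The automorphism $\phi$ of $NS(M)$ induces an automorphism of the first barycentric subdivision of $NS(M)$ sending the barycenter $\hat\tau$ of a simplex $\tau$ to $\widehat{\phi(\tau)}$. If $\sigma$ is a \emph{simple} simplex of $NS(M)$, then the open simplex $\mathring\sigma$ is disjoint from $\mathbb S_\infty$, hence $\hat\sigma\in cv_n=NS(M)\setminus N\mathbb S_\infty$ and $\hat\sigma\notin N\mathbb S_\infty$, so by maximality $\hat\sigma$ is a vertex of the spine $K_n$. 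Consequently, if $\phi$ acts trivially on $K_n$, then $\widehat{\phi(\sigma)}=\phi(\hat\sigma)=\hat\sigma$, i.e. $\phi(\sigma)=\sigma$ as a simplex, for \emph{every} simple simplex $\sigma$; in particular $\phi$ preserves setwise every reduced simplex, merely permuting its $n$ vertices.

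The key geometric input I would then establish is the following: for every non-separating embedded sphere $S$ in $M$ there exist two reduced sphere systems $\Sigma=\{S,\Sigma_2,\dots,\Sigma_n\}$ and $\Sigma'=\{S,\Sigma_2',\dots,\Sigma_n'\}$ whose only common component, up to isotopy, is $S$. One gets $\Sigma$ by extending $S$ to a reduced system (using the facts recalled in Sections~\ref{GOG}--\ref{CNS}). To produce $\Sigma'$ I would take $\Sigma'=f(\Sigma)$ for a suitable $f\in\mathcal{M}ap(M)$ fixing the isotopy class of $S$: since $n\geq 3$, the stabilizer of $[S]$ in $Out(\mathbb F_n)$ (hence, via the surjection with finite kernel, in $\mathcal{M}ap(M)$) is infinite, as it contains a copy of $Aut(\mathbb F_{n-1})$ acting on the complementary free factor of the splitting determined by $S$; on the other hand, for each pair $i,j\in\{2,\dots,n\}$ the set of $f$ in this stabilizer with $f[\Sigma_i]=[\Sigma_j]$ is either empty or a coset of a subgroup of infinite index. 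Hence some $f$ in the stabilizer of $[S]$ moves $[\Sigma_2],\dots,[\Sigma_n]$ entirely off the finite set $\{[\Sigma_2],\dots,[\Sigma_n]\}$, and then automatically off $[S]$ as well (otherwise $f(\Sigma)$ would have two isotopic components). This $\Sigma'=f(\Sigma)$ is the desired second reduced system. (Alternatively, one can exhibit $\Sigma'$ directly inside $M\setminus S\cong \sharp_{n-1}S^2\times S^1$ with two balls removed.)

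Granting this, the conclusion is immediate. The simplices $\sigma=\langle\Sigma\rangle$ and $\sigma'=\langle\Sigma'\rangle$ are reduced, hence simple, so by the first paragraph $\phi(\sigma)=\sigma$ and $\phi(\sigma')=\sigma'$ setwise. Since $[S]$ is a vertex of both $\sigma$ and $\sigma'$, the vertex $\phi([S])$ is again a vertex of both, so $\phi([S])\in V(\sigma)\cap V(\sigma')=\{[S]\}$, i.e. $\phi([S])=[S]$. As $S$ was an arbitrary non-separating embedded sphere, $\phi$ fixes every vertex of $NS(M)$; since a simplicial automorphism of $NS(M)$ is determined by its vertex map (and $NS(M)$ is flag), $\phi=\mathrm{id}$.

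I expect the only real obstacle to be the construction of the second reduced system $\Sigma'$, i.e. verifying that two reduced sphere systems through a given non-separating sphere can be chosen to share only that sphere; the stabilizer/coset argument above is the cleanest route, and it is precisely here that the hypothesis $n\geq 3$ is used (one needs $\mathrm{Stab}_{Out(\mathbb F_n)}([S])$ to be infinite and to remain infinite after further stabilizing one of the other spheres only up to infinite index). Everything else is routine bookkeeping with the barycentric subdivision and the observation that reduced simplices are simple, so their barycenters lie in $K_n$.
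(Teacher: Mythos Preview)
Your argument is correct and reaches the same conclusion, but the route differs from the paper's. Both proofs begin from the same observation---that triviality on $K_n$ forces $\phi$ to preserve each simple simplex setwise---and then pin down individual vertices. The paper works at the \emph{top} of $cv_n$: it takes a maximal non-separating sphere system $\Sigma$ (a top-dimensional simplex $\sigma$ in $cv_n$), checks that each codimension-$1$ face $\Sigma\setminus\Sigma_i$ is still simple (if not, one produces a separating component of $\Sigma$, a contradiction), and concludes that $\phi$ fixes $\sigma$ and the opposite face, hence fixes the vertex $[\Sigma_i]$. Since every non-separating sphere sits in some maximal non-separating system, this already covers all vertices. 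You instead work at the \emph{bottom} of $cv_n$, with reduced simplices: you find two reduced systems through a given $S$ meeting only in $S$, and intersect their (setwise-preserved) vertex sets.

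What each approach buys: the paper's key geometric input---``deleting one sphere from a maximal non-separating system keeps the complement simply connected''---is a one-line topological check, so no group theory beyond what is already in the paper is needed, and the argument is uniform in $n$. Your key input---the existence of $\Sigma'$---is less elementary: the coset argument you sketch needs B.~H.~Neumann's lemma together with the verification that $\mathrm{Stab}_{Out(\mathbb F_n)}([S])\cap\mathrm{Stab}([\Sigma_i])$ has infinite index in $\mathrm{Stab}_{Out(\mathbb F_n)}([S])$ for each $i\geq 2$, which is where $n\geq 3$ genuinely enters. That claim is true (the $Aut(\mathbb F_{n-1})$ inside the stabilizer already moves each $[\Sigma_i]$ over an infinite orbit), but it is more machinery than the paper uses. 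Your alternative suggestion---building $\Sigma'$ by hand inside $M\setminus S$---would bring your proof closer in spirit to the paper's and would avoid the coset detour entirely.
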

   \begin{proof}
   Let $\phi$ be an automorphism of $NS(M)$ which acts trivially on the spine $K_n$. Then, $\phi$ maps every simplex in $cv_n$ to itself. Let $\Sigma=\cup_i \Sigma_i$ be a sphere system in $M$ determining a top dimensional simplex $\sigma$ in $cv_n$. One can see that the codimension $1$ face given by $\Sigma\setminus \Sigma_i$ for each $i$ is also contained in $cv_n$. In order to see this it suffices to prove that it is contained in $NS(M)\setminus N\mathbb S_\infty$.  If this were not the case, then there is a unique component $U$ of $M \setminus(\Sigma \setminus \Sigma_i)$ homeomorphic to $S^2 \times S^1$ with a $3$-ball removed. The boundary of $U$ is a connected component of $\Sigma$ which separates $M$, a contradiction to the assumption that $\sigma\in cv_n$. It follows that the automorphism $\phi$ maps the codimension $1$ face of $\sigma$ determined by $\Sigma$  to itself. In particular, $\phi$ has to fix  the opposite vertex $[\Sigma_i]$ of $\sigma$. As $i$ is arbitrary, this shows that $\phi$ is the identity on $\sigma$. As $\sigma$ is arbitrary, it follows that $\phi$ is the identity on $cv_n$.

 Now, consider any vertex $[S]$ in $NS(M)$. We can extend $S$ to a maximal sphere system $\Sigma$ of non-separating spheres in $M$. The simplex determined by $\Sigma$ is contained in $cv_n$ and is hence fixed by $\phi$. In particular, $\phi$ is identity on $[S]$.  This shows that $\phi$ is identity on $NS(M)$
 \end{proof}

Thus, the group $Aut(NS(M))$ of simplicial automorphisms of $NS(M)$ is isomorphic to the group $Out(F_n)$. In particular, this shows that every automorphism of $NS(M)$ is geometric, i.e., it is given by a homeomorphism of the manifold $M$ by the natural action of the mapping class group.

Now, the $1$-skeleton $\mathbb K$  of the complex $NS(M)$ is a subgraph of the sphere graph. Given any simplicial automorphism of $\phi$ of $\mathbb K$, it has unique extension to a simplicial automorphism $\phi'$ of $NS(M)$ as the complex $NS(M)$ is flag.  The automorphism $\phi'$ is geometric, i.e. there exists a homeomorphism $h$ of the manifold $M$ which induces the automorphism $\phi'$ on $NS(M)$. The homeomorphism $h$ induces an automorphism $\phi''$ on the sphere complex $\mathbb S(M)$ such that restriction of $\phi''$ to $NS(M)\subset \mathbb S(M)$ is $\phi'$. Now, restriction of $\phi''$ to the sphere graph gives an extension of the automorphism $\phi $ of $\mathbb K$ to an automorphism the sphere graph. Thus, we have

\begin{corollary}
Every automorphism $\phi$ of $\mathbb K$ has an extension to an automorphism of the sphere graph.
\end{corollary}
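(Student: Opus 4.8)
The plan is to bootstrap from two facts already established: that $NS(M)$ is a flag complex, and that every automorphism of $NS(M)$ is geometric. First I would use the flag property. Since an automorphism of the $1$-skeleton of a flag complex extends uniquely to the whole complex, the given $\phi\in Aut(\mathbb K)$ determines a unique $\phi'\in Aut(NS(M))$ restricting to $\phi$ on $\mathbb K$.

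Next I would apply the geometricity of automorphisms of $NS(M)$ that accompanies Theorem~\ref{MainT}: there is a homeomorphism $h$ of $M$ whose induced action on isotopy classes of non-separating embedded spheres is exactly $\phi'$. The point to exploit is that $h$, being a homeomorphism of $M$, permutes the isotopy classes of \emph{all} essential embedded spheres and preserves disjointness, hence it induces a simplicial automorphism $\phi''$ of the full sphere complex $\mathbb S(M)$. Since $h$ sends non-separating spheres to non-separating spheres and $NS(M)$ is the full subcomplex of $\mathbb S(M)$ on such vertices, the restriction of $\phi''$ to $NS(M)$ equals $\phi'$. Restricting $\phi''$ instead to the $1$-skeleton of $\mathbb S(M)$, namely the sphere graph, gives a simplicial automorphism of the sphere graph whose restriction to the subgraph $\mathbb K$ is $\phi'|_{\mathbb K}=\phi$; this is the required extension.

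The only ingredient carrying genuine content — and hence the ``main obstacle'', such as it is — is the appeal to geometricity in the second step, which is not proved afresh here but borrowed: it rests on the isomorphisms $Aut(NS(M))\cong Aut(K_n)\cong Out(\mathbb F_n)$ from Theorem~\ref{MainT} and on Laudenbach's surjectivity of $\Phi\colon\mathcal{M}ap(M)\to Out(\mathbb F_n)$, which together produce the homeomorphism $h$ realizing $\phi'$. Everything else is formal: the uniqueness of the flag extension, and the compatibility of the restriction homomorphisms $Aut(\mathbb S(M))\to Aut(NS(M))$ and $Aut(\mathbb S(M))\to Aut(\text{sphere graph})$ with $Aut(NS(M))\to Aut(\mathbb K)$, which just needs one to check that passing $\phi\mapsto\phi'\mapsto\phi''$ and restricting back recovers $\phi$ on $\mathbb K$.
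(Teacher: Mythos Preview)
Your proposal is correct and follows essentially the same argument as the paper's own proof: extend $\phi$ to $\phi'\in Aut(NS(M))$ via the flag property, invoke geometricity to get a homeomorphism $h$ of $M$, let $\phi''$ be the induced automorphism of $\mathbb S(M)$, and restrict to the sphere graph. The paper's proof is terser but uses precisely these steps in this order.
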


\bibliographystyle{amsplain}

\end{document}